\title[New Families Satisfying the Uniform Boundedness Principle]
      {New families satisfying the Dynamical Uniform
        Boundedness Principle over function fields}
\author{John R. Doyle}
\address{Department of Mathematics, Oklahoma State University,
Stillwater, OK} 
\email{john.r.doyle@okstate.edu}
\author{Xander Faber}
\address{Institute for Defense Analyses, Center for Computing Sciences, Bowie, MD}
\email{awfaber@super.org}
\begin{document}


\begin{abstract}
  We extend a technique, originally due to the first author and
  Poonen, for proving cases of the Strong Uniform Boundedness
  Principle (SUBP) in algebraic dynamics over function fields of
  positive characteristic. The original method applied to unicritical
  polynomials for which the characteristic does not divide the
  degree. We show that many new 1-parameter families of polynomials
  satisfy the SUBP, including the family of all quadratic polynomials
  in even characteristic. We also give a new family of non-polynomial,
  non-Latt\`es rational functions that satisfies the SUBP.
\end{abstract}
\maketitle


\section{Introduction}

Our goal  is to extend a technique of the first author and Poonen
for proving cases of the Strong Uniform Boundedness Conjecture in arithmetic
dynamics over function fields. For notation, let $K$ be a field and $f \in K(z)$
a nonconstant rational function. Define the set of $K$-rational
\textbf{preperiodic points} of $f$ to be
\[
   \PrePer(f,K) := \{x \in \PP^1(K) \ : \ \text{$x$ has finite forward orbit
     under $f$}\}.
\]

\begin{UBP}
  Let $K$ be any field with algebraic closure $\bar K$. Let $d >1$, and let $\cF
  \subset \bar K(z)$ be a set of rational functions of degree $d$.
  \begin{itemize}
    \item We say that $\cF$ satisfies the \textbf{Uniform Boundedness Principle}
      (\textbf{UBP}) over $K$ if there is a bound $A = A(\cF,K)$ such that
      ${\# \PrePer(f, K) \le A}$ for each $f \in \cF(K)$.
    \item We say that $\cF$ satisfies the \textbf{Strong Uniform Boundedness
      Principle} (\textbf{SUBP}) over $K$ if for every $D \ge 1$ there is a
      bound $B = B(\cF,D)$ such that ${\# \PrePer(f,L) \le B}$ for every extension
      $L/K$ of degree $D$ and every $f \in \cF(L)$.
  \end{itemize}
\end{UBP}

In order for either of these Uniform Boundedness Principles to hold over a
field~$K$, the number of $K$-rational preperiodic points for maps in the family
$\cF$ must be finite. Thus, for example, (S)UBP will not be satisfied over an
algebraically closed field. On the other hand, when $K$ is a number field,
Northcott showed that any rational function of degree at least $2$ has only
finitely many $K$-rational preperiodic points \cite{Northcott_1950}. In this
setting, Morton and Silverman have conjectured that the set $\cF = \Rat_d(\bar
\QQ)$ of degree-$d$ rational functions with algebraic number coefficients
satisfies the Strong Uniform Boundedness Principle over $\QQ$
\cite[p. 100]{Morton_Silverman_1994}. We give a summary of the current state of
knowledge in the next section.

Let $k$ be a field. Throughout this article, a \textbf{function field over $k$}
is any finite extension of the rational function field $k(\T)$. Equivalently, a
function field over $k$ is the field of rational functions of some integral
$k$-curve. We refer to $k$ as the \textbf{constant subfield} of the function
field.

The primary test case for many ideas in dynamics on the projective line is the
family of quadratic polynomials $f_c(z) = z^2 + c$. The first author and Poonen
proved the Strong Uniform Boundedness Principle for this family over a function
field\footnote{For the SUBP to hold in this setting, one must generally exclude
those parameters $c$ lying in the constant subfield.} in characteristic
different from $2$ \cite{Doyle_Poonen_gonality}. Though global in spirit, the
argument for the positive characteristic case in \cite{Doyle_Poonen_gonality}
leaned heavily on an analysis of the dynamics of the map $f(z) = z^2 + \t$ on
the Julia set over the Laurent series field $\FF_q\Ls{1/\t}$. By generalizing
this dynamical setup, we can extend this argument to other 1-parameter families.

More precisely, take $f_\t \in \FF_q(\t)(z)$ with $d := \deg_z(f_\t) > 1$. We
can associate to $f_\t$ a countable collection of smooth algebraic
$\FF_q$-curves, known as {\it dynatomic curves}, whose $L$-rational points (for
an extension $L/\FF_q$) parameterize maps $f_\s$ with $\s \in L$ together with
a marked $L$-rational preperiodic point of $f_\s$. Each dynatomic curve $Z$ is
equipped with a canonical morphism $\varphi_Z \colon Z \to \PP^1$, projecting
onto the $t$-coordinate and forgetting the marked preperiodic point. See
Section~\ref{sec:dynatomic} for more details.  For convenience, we will say that
a property of dynatomic curves holds ``as $Z \to \infty$'' if that property
holds as $i \to \infty$ for any ordering $Z_1, Z_2, Z_3, \ldots$ of the set of
dynatomic curves. Consider the following statements:

\begin{enumerate}
  \item The degree of definition and the ramification index of the geometric
    points in the fiber $\varphi_Z^{-1}(\infty)$ remain uniformly bounded as $Z
    \to \infty$.
  
  \item The degree of $\varphi_Z \colon Z \to \PP^1$ tends to infinity
    as $Z \to \infty$.
      
  \item There is $\rd \ge 1$ such that $\#Z(\FF_{q^\rd}) \to \infty$ as
    $Z \to \infty$.
      
  \item The gonality\footnote{The \textbf{gonality} of an irreducible $k$-curve
  $C$ is the minimum degree of a nonconstant $k$-morphism $C \to \PP^1$.} of $Z$ tends to
    infinity as $Z \to \infty$.
      
  \item For any extension $k/\FF_q$, and for any function field $K$ over $k$,
    the Strong Uniform Boundedness Principle over $K$ holds for the set
   \[
     \cF = \{ f_\s \ : \ \s \in \overline{K} \text{ and } f_\s \text{ is
       not $\overline{K}$-conjugate to an element of $\overline{k}(z)$}\}.
   \]
\end{enumerate}

For brevity, we will say ``the Strong Uniform Boundedness Principle over $K$
holds for the family $f_\t$'' if (5) is true.  The arguments in
\cite{Doyle_Poonen_gonality} give the chain of implications
  \[
    (1)+(2) \ \Longrightarrow \ (3) \ \Longrightarrow \ (4) \Longrightarrow \ (5).
  \]
We provide sufficient conditions for (1) and (2) to hold in
Section~\ref{sec:family}. Applying them to some special cases, we are able to
execute the above plan for new families of polynomials, including the first case
where the characteristic of the ground field divides the degree.

\begin{theorem}
  \label{thm:ubp_polynomials}
  Let $\FF_q$ be a finite field, let $d > 1$, and let $\alpha_1, \ldots,
  \alpha_d \in \FF_q[\t]$ be distinct polynomials. Set
  \[
      f_\t(z) = (z - \alpha_1) \cdots (z - \alpha_d). 
      \]
  Assume that for each $i \ne j$, the following inequality is true:
  \[
     \deg(\alpha_i - \alpha_j) + \sum_{\ell \ne j} \deg(\alpha_\ell -
     \alpha_j) > \max_\ell \deg(\alpha_\ell).
  \]
  Then, for any extension $k/\FF_q$ and any function field $K$ over $k$, the
  Strong Uniform Boundedness Principle over $K$ holds for the family $f_\t$.
\end{theorem}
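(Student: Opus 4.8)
The plan is to verify conditions (1) and (2) from the Introduction for the family $f_\t(z) = (z-\alpha_1)\cdots(z-\alpha_d)$ under the stated hypothesis, and then invoke the implication chain $(1)+(2) \Rightarrow (5)$ supplied by the arguments of \cite{Doyle_Poonen_gonality}. The first step is to understand the behavior of $f_\t$ at the place $\t = \infty$ of $\PP^1$, or rather over the completion $\FF_q\Ls{1/\t}$, and to translate the hypothesis into a statement about the Newton polygon / valuation data of $f_\t$ there. Writing $v$ for the valuation with $v(\t) = -1$, each coefficient of $f_\t$ (as a polynomial in $z$) has a valuation we can read off from the $\deg(\alpha_i)$, and the hypothesis $\deg(\alpha_i - \alpha_j) + \sum_{\ell \ne j}\deg(\alpha_\ell - \alpha_j) > \max_\ell \deg(\alpha_\ell)$ is designed precisely to force a favorable configuration: it should guarantee that $f_\t$, near $\infty$, looks dynamically like a totally ramified power map on a suitable residue disk, so that the critical points and their forward orbits stay under control in a single bounded-degree extension. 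I would make this precise by computing $v(f_\t'(z))$ and locating the critical points, then checking that each critical point lies in a disk on which $f_\t$ acts expandingly with controlled ramification.

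Next, for condition (1), I would recall (from the dynatomic-curve material in Section~\ref{sec:dynatomic}, which I may assume) that the geometric points of $\varphi_Z^{-1}(\infty)$ correspond to preperiodic points of $f_\t$ in $\overline{\FF_q\Ls{1/\t}}$, or more precisely to periodic/preperiodic cycles; their degree of definition and ramification index over the base are governed by how badly $f_\t$ ramifies along the orbit near $\infty$. The hypothesis should yield that all such points lie in a fixed tamely-or-boundedly-ramified extension of $\FF_q\Ls{1/\t}$ — concretely, that every $\overline{K}$-preperiodic point near $\infty$ has $v$-value in a bounded denominator lattice and residue field of bounded degree — giving the required uniform bound independent of $Z$. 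The key computational input is that the self-similar/power-map structure I established in the first step propagates along the orbit: preimages under a power-like map of a controlled point remain controlled. I expect this is the step that requires the most care, since ``controlled'' must be made uniform over all dynatomic levels simultaneously, and one must handle the periodic part (cycles) and the strictly preperiodic tail separately; fixed points and low-period cycles at $\infty$ are the typical source of trouble.

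For condition (2), that $\deg(\varphi_Z) \to \infty$, I would count: $\deg(\varphi_Z)$ for the period-$n$ (or level-$n$ preperiodic) dynatomic curve is, up to the usual combinatorial factors, essentially the number of points of exact period $n$ for a generic $f_\s$, which grows like $d^n$; the only way this fails to go to infinity is if the dynatomic polynomials degenerate — e.g., become inseparable or constant in the relevant variable — which is a characteristic-$p$ phenomenon tied to $p \mid d$ or to special orbit collisions. Here I would argue that the genericity of the $\alpha_i$ (they are distinct, so $f_\t$ is genuinely degree $d$ with $d$ distinct roots, hence not conjugate over $\overline{K}$ to something defined over $\overline{k}$) prevents such degeneration for all but finitely many levels, or at worst that the degrees still tend to infinity along the full sequence $Z_1, Z_2, \ldots$. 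This should be the more routine of the two verifications. Once (1) and (2) are in hand, the cited implications give (3), then (4) via a gonality bound (Castelnuovo–Severi or a Stothers–Mason-type argument over $\FF_q$), and finally (5), the SUBP over every function field $K$ over every $k/\FF_q$, which is the assertion of the theorem. The main obstacle, to reiterate, is the uniform-in-$n$ control of ramification in condition (1): everything hinges on showing that the inequality in the hypothesis is exactly what is needed to keep the orbit of every critical point inside a residue disk where $f_\t$ behaves like a power map, so that no new ramification is introduced as one climbs the dynatomic tower.
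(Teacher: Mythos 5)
Your high-level plan --- verify conditions (1) and (2) from the Introduction and invoke the chain $(1)+(2) \Rightarrow (3) \Rightarrow (4) \Rightarrow (5)$ --- matches the paper's framework, and you correctly identify that the hypothesis inequality must be used to control the $v$-adic picture at $\t = \infty$. But the concrete mechanism you describe is not the one the hypothesis supplies. The inequality is not there to make $f_\t$ look like a totally ramified power map near $\infty$, and it is not about keeping critical orbits inside a residue disk where $f_\t$ ``behaves like a power map.'' Rather, it ensures that for each root $\alpha_j$ of $f_\t$ there is a disk $D(\alpha_j, r_j)$, with $r_j = R / \prod_{\ell \ne j} |\alpha_\ell - \alpha_j|_v$ and $R = |\t|_v^{\max_\ell \deg \alpha_\ell}$, that maps \emph{bijectively} onto the filled-Julia-set disk $D(0,R)$, and --- this is exactly what the inequality buys --- that these $d$ disks are pairwise disjoint, since the stated inequality is equivalent to $r_j < |\alpha_i - \alpha_j|_v$ for all $i \ne j$. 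In particular $f_\t$ is injective on each $D(\alpha_j, r_j)$, there are no critical points there at all, and it is the resulting conjugacy to a full shift on $d$ symbols (Lemma~\ref{lem:symbolic}) that bounds the degree of definition and ramification of the preperiodic points uniformly in $Z$, giving condition (1).

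Your sketch for condition (2) also misses the real difficulty. The degree of $\varphi_Z$ is the $z$-degree of a single irreducible factor $H$ of $A_{m+n}B_m - A_m B_{m+n}$, not the total number of points of exact period $n$, so a $d^n$-growth heuristic tells you the \emph{sum} of degrees over all components at level $(m,n)$ is large, but says nothing about individual components; a priori one could have unboundedly many small-degree factors. What must actually be shown is that there are only \emph{finitely many} dynatomic curves of each fixed degree $\ell$. The paper does this by first using $v$-adic integrality of finite preperiodic points at every $v \ne \ord_\infty$ (which follows since $f_\t$ is monic with integral coefficients) to force the leading coefficient $H_0$ to be a nonzero constant, and then using the boundedness of preperiodic points at $v = \ord_\infty$ (the last conclusion of Lemma~\ref{lem:symbolic}) to bound $\deg_\t(H_i) \le \ell N$ for a constant $N$ depending only on the family; since $\FF_q[\t]$ has finitely many polynomials of each bounded degree, this leaves finitely many possibilities for $H$. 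A genericity/degeneration argument does not engage with this counting problem. In the paper all of this is packaged into Proposition~\ref{prop:technical}, whose eight checkable conditions reduce the theorem to a short Newton-polygon and disk computation; you would do better to verify those conditions directly rather than reprove the local machinery from scratch.
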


For example, we could enumerate the elements of $\FF_q$ as ${\varepsilon_1,
  \ldots, \varepsilon_q}$ and set $\alpha_i = \varepsilon_i \t$. The hypothesis
of the theorem is satisfied, and we have ${f_\t(z) = z^q - \t^{q-1}z}$. Cases
where the characteristic of the ground field divides the degree of the family
have been a sticking point in much previous work.

The first author and Poonen proved that the Strong Uniform Boundedness Principle
holds for the set of all quadratic polynomials over a function field of
characteristic $p \ne 2$ \cite{Doyle_Poonen_gonality}. That approach focused on
the family $z^2 + c$, which does not capture a general quadratic in even
characteristic. We now prove the Strong Uniform Boundedness Principle for this
family in all positive characteristics by studying a family of quadratic
polynomials which is more appropriate in characteristic $2$.
  
\begin{corollary}
  \label{cor:quadratic}
  Let $k$ be any field, and let $K$ be a function field over $k$. For each $D
  \ge 1$, there exists a bound $B = B(D)$ such that $\#\PrePer(f,L) \le B$ for
  any finite extension $L/K$ of degree at most $D$ and any quadratic polynomial
  $f$ with coefficients in $L$ that is not conjugate to a polynomial with
  coefficients in $\overline{k}$.
\end{corollary}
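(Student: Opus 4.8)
The plan is to put an arbitrary quadratic polynomial over $L$ into a normal form and then invoke an already-known Strong Uniform Boundedness Principle over $K$, treating the cases $\operatorname{char} k \ne 2$ and $\operatorname{char} k = 2$ separately; all of the substance lies in the second. When $\operatorname{char} k \ne 2$ (including $\operatorname{char} k = 0$) one completes the square: every quadratic $f$ over $L$ is conjugate over $L$ to $z^2 + c$ for a unique $c \in L$, and a short argument using that $\infty$ is the unique totally ramified fixed point of $z^2+c$ shows that $f$ is $\overline{L}$-conjugate to a polynomial over $\bar k$ exactly when $c \in \bar k$. Since $\#\PrePer$ is unchanged both by conjugation by an $L$-rational Möbius transformation and by enlarging the field in which preperiodicity is tested, the desired bound is then immediate from the Strong Uniform Boundedness Principle over $K$ for the family $z^2 + \t$, proved by the first author and Poonen \cite{Doyle_Poonen_gonality}.

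So assume $\operatorname{char} k = 2$, whence $\FF_2 \subseteq k$ and Theorem~\ref{thm:ubp_polynomials} is available with $q = 2$. First I would remove the inseparable quadratics: if $f(z) = a z^2 + c$, then rescaling to a monic polynomial and conjugating by a translation $z \mapsto z + \beta$ with $\beta$ a root of a suitable quadratic over $L$ turns $f$ into $z^2$, so such $f$ is $\overline{L}$-conjugate to an element of $\FF_2(z) \subseteq \bar k(z)$ and is excluded by hypothesis. For a separable quadratic $f(z) = a z^2 + b z + c$ with $b \ne 0$, the same two moves — rescale to monic, then translate by a root $\beta$ of a quadratic over $L$ — produce, over an extension $L'/L$ with $[L':L] \le 2$, a conjugate $g(z) = z^2 + b z = (z - 0)(z - b)$. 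Here the key point, again via the unique totally ramified fixed point $\infty$, is that the linear coefficient $b$ is a genuine conjugacy invariant of separable quadratic polynomials in characteristic $2$, so that $f$ is conjugate to a polynomial over $\bar k$ precisely when $b \in \bar k$; one must also verify the elementary fact that translations leave the linear coefficient untouched in characteristic $2$, which is exactly why completing the square fails and why $g$ retains a linear term.

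To conclude, observe that $f_\t(z) = z(z - \t)$ satisfies the hypothesis of Theorem~\ref{thm:ubp_polynomials} with $d = 2$, $\alpha_1 = 0$, $\alpha_2 = \t$, since the required inequality reduces to $2\deg(\t) > 1$. Hence the Strong Uniform Boundedness Principle over $K$ holds for $f_\t$, giving a bound $B_0(D')$ valid for all extensions of $K$ of degree at most $D'$ and all admissible parameters. Given $f$ over $L$ with $[L:K] \le D$ and $f$ not conjugate to a polynomial over $\bar k$: in the separable case the reduction above yields $g = f_b$ over $L'$ with $[L':K] \le 2D$ and $b \notin \bar k$, so that $\#\PrePer(f,L) \le \#\PrePer(f,L') = \#\PrePer(g, L') \le B_0(2D)$; the inseparable case and the characteristic $\ne 2$ case contribute nothing new. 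Taking $B(D)$ to be the maximum of $B_0(2D)$ and the bound from \cite{Doyle_Poonen_gonality} finishes the argument.

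The main obstacle is organizational rather than analytic: the proof depends on keeping three notions aligned — being $\overline{L}$-conjugate to a polynomial over $\bar k$, the value of the conjugacy invariant ($c$ in odd characteristic, $b$ in characteristic $2$), and membership in the excluded family $\cF$ attached to Theorem~\ref{thm:ubp_polynomials} — and on checking that this alignment survives the degree-$\le 2$ field extensions introduced by the normalization. The one place where characteristic $2$ genuinely intervenes is that, the square being uncompletable, the natural normal form is $z^2 + bz$ rather than $z^2 + c$, so one must confirm that it is the linear coefficient that the family $z(z-\t)$ is built to track.
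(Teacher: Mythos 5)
Your proposal is correct, and its core mechanism — normalize to monic, then translate by a root of $z^2 + (b-1)z + c$ so the constant term vanishes — is exactly the paper's. But you organize the cases differently, and it is worth noting what the paper's organization buys.

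The paper does not distinguish characteristic $2$ from other positive characteristics. It notes that Theorem~\ref{thm:ubp_polynomials} applies to $f_\t(z) = z(z+\t)$ for \emph{every} prime $p$ (the inequality is $2\deg\t > 1$, with no dependence on $p$), and then runs the translation argument uniformly in all positive characteristics: for $g(z) = z^2 + bz + c$, adjoin a fixed point $u$ and set $h(z) = g(z+u) - u = z(z + 2u + b)$. There is no need to complete the square when $p\ne 2$, and hence no need to invoke the earlier \cite{Doyle_Poonen_gonality} result for $z^2 + c$ except in characteristic $0$, where the new machinery is unavailable. Your version splits $p \ne 2$ off and sends it through $z^2 + c$, which is valid but does extra work and multiplies the number of external inputs. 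The one thing you did more carefully than the paper: you verify explicitly that the parameter lands outside $\bar k$ (via the observation that the linear coefficient is a conjugacy invariant of monic quadratics in characteristic $2$, and separately that inseparable quadratics are conjugate to $z^2$ and hence excluded by hypothesis). The paper leaves both points implicit — if $2u+b \in \bar k$ then $h \in \bar k[z]$ and $g$ would violate the corollary's hypothesis, which also silently absorbs the inseparable case since that forces $h = z^2$ — so your bookkeeping is a legitimate clarification, just wrapped in an unnecessary case split.

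One small imprecision: you write that the inequality in Theorem~\ref{thm:ubp_polynomials} "reduces to $2\deg(\t)>1$." What it actually reduces to is $\deg(\t) + \deg(\t) > \max(\deg 0, \deg\t) = 1$, i.e.\ $2 > 1$; the right side is $\max_\ell \deg(\alpha_\ell)$, not $1$ a priori, so it is worth stating that $\deg 0 = -\infty$ and the max is attained at $\alpha_2 = \t$.
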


\begin{proof}
  We may assume that the characteristic of $k$ is positive, as the
  characteristic-zero case was handled in \cite{Doyle_Poonen_gonality}.
  
  By Theorem~\ref{thm:ubp_polynomials}, the Strong Uniform Boundedness Principle
  holds for the family ${f_\t(z) = z(z+\t)}$. Fix $D \ge 1$, and set $D' =
  2D$. Then there exists a bound $B = B(D')$ such that for any function field
  $L' / K$ with $[L' \colon K ] \le D'$ and any $\s \in L' \smallsetminus
  \overline{k}$, we have
  \[
      \# \PrePer(f_\s, L') \le B.
  \]
  We claim that $B$ is the bound we seek.

  Let $L/K$ be an extension of degree $D$, and let ${g(z) = az^2 + bz +
    c}$ be a quadratic polynomial over $L$. Replacing $g(z)$ with $a g(z/a)$
  allows us to assume $a = 1$ without affecting the number of $L$-rational
  preperiodic points. Set $L' = L(u)$, where $u$ is a solution to the equation
  $z^2 + (b-1)z + c = 0$, and define
  \[
    h(z) = g(z+u) - u = z(z+2u+b). 
  \]
  Since $[L' \colon K] \le 2[L \colon K] \le D'$, and since $h$
  is a member of the quadratic family $f_\t$, we find that
  \[
  \#\PrePer(g,L) \le \#\PrePer(h,L') \le B. \qedhere
  \]
\end{proof}

A rational function is of \textbf{polynomial} type if it has a totally ramified
fixed point.  Aside from trivial examples like finite sets, we are aware of
three sets of \textit{non-polynomial} type rational functions for which the
Strong Uniform Boundedness Principle has been shown to hold over number fields
or function fields: Latt\`es maps, functions with a bounded amount of bad
reduction, and twists of a single rational function with nontrivial automorphism
group. We give further details and provide references on these examples in
Section~\ref{sec:background}. We close this introduction with an example of an
algebraic family of non-polynomial type rational functions that avoids all of
these special cases.

\begin{theorem}
  \label{thm:ubp_rational}
  Let $k$ be a field of characteristic $p > 0$, and let $K$ be a function field
  over $k$. Let $d \ge 2$ and $e \le d -2$ be nonnegative integers such that $p$
  does not divide $d$. Define
  \[
      f_\t(z) = \frac{z^d - \t}{z^e}.
  \]
  Then the Strong Uniform Boundedness Principle over $K$ holds for $f_\t$.
\end{theorem}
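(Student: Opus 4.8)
The plan is to verify conditions (1) and (2) from the introduction for the family $f_\t(z)=(z^d-\t)/z^e$; by the chain of implications $(1)+(2)\Rightarrow(3)\Rightarrow(4)\Rightarrow(5)$ recalled there, this gives the Strong Uniform Boundedness Principle over $K$. For both conditions we appeal to the sufficient criteria of Section~\ref{sec:family}; the substance is a purely local analysis of $f_\t$ at the place $\t=\infty$, which controls the fiber $\varphi_Z^{-1}(\infty)$.

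Condition (2), that $\deg\varphi_Z\to\infty$, is the routine half: by the criterion of Section~\ref{sec:family} it should suffice to observe that $f_\t$ is not isotrivial over $\overline{\FF_q(\t)}$, and this follows from the bad reduction at $\t=\infty$ established below, since an isotrivial family has potentially good reduction at every place.

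For condition (1) we work over $F=\FF_q\Ls{1/\t}$. Since $p\nmid d$, adjoining $\pi:=\t^{-1/d}$ gives a tamely totally ramified extension $F'=\FF_q\Ls{\pi}$ of degree $d$, and conjugating $f_\t$ by the scaling $z\mapsto\pi^{-1}z$ produces
\[
  g(w)\;=\;\pi\cdot f_\t(\pi^{-1}w)\;=\;\pi^{-(d-e-1)}\,\frac{w^{d}-1}{w^{e}},
\]
a degree-$d$ rational map over $F'$ whose leading coefficient has valuation $-(d-e-1)<0$, using $e\le d-2$. This $g$ has genuinely bad reduction, but of a rigid shape. One checks that every orbit with $|w|$ large, or (since $e\ge 1$) with $|w|$ small, is attracted to the superattracting fixed point $w=\infty$, so the set of bounded orbits is contained in $\bigcup_{\zeta\in\mu_d}D\!\bigl(\zeta,\,|\pi|^{\,d-e-1}\bigr)$, a union of exactly $d$ disks: here $p\nmid d$ ensures both that $w^{d}-1$ has $d$ distinct roots modulo the maximal ideal and that, in the rescaled coordinate $u=(w-\zeta)\,\pi^{-(d-e-1)}$, one has
\[
  g(w)\;=\;d\,\zeta^{\,d-1-e}\,u\;+\;O\!\bigl(\pi^{\,d-e-1}\bigr)
\]
with $d\,\zeta^{\,d-1-e}$ a unit. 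Thus $g$ maps each small disk onto the unit disk as an approximate similarity, and every bounded orbit is coded by an itinerary over the $d$-letter alphabet $\mu_d$. A Hensel/contraction-mapping argument then realizes each preperiodic point of $g$ as a convergent power series in $\pi$ with coefficients in the fixed finite extension $\FF_q(\mu_d)$ of $\FF_q$; together with the finitely many exceptional preperiodic points $0$, $\infty$, and $\mu_d$, which lie in $\PP^1\bigl(\FF_q(\mu_d)\Ls{\pi}\bigr)$ directly, this shows that every geometric point of $\varphi_Z^{-1}(\infty)$ is defined over $\FF_q(\mu_d)\Ls{\pi}$, and hence has ramification index over $F$ dividing $d$ and residue degree at most $[\FF_q(\mu_d):\FF_q]$ --- bounds independent of $Z$. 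This verifies (1), and with (2) the theorem follows.

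The main obstacle is the final step: converting the ``approximate similarity on $d$ disks'' picture into a bound over $\varphi_Z^{-1}(\infty)$ that is uniform as $Z\to\infty$. One must control the error term $O(\pi^{\,d-e-1})$ along arbitrarily long itineraries and confirm that the nested disks, though shrinking without bound, never force ramification outside $\FF_q(\mu_d)\Ls{\pi}$ --- equivalently, that the reduced self-map attached to $g$ at the relevant type-II point of the Berkovich line $\PP^1_{F',\mathrm{an}}$ is separable, which is exactly where $p\nmid d$ is used, and that the preperiodic set of that reduced map lifts without wild ramification. Fitting this local analysis to the precise hypotheses of the Section~\ref{sec:family} criterion for (1) --- and in particular handling the cases $p\mid e$ or $p\mid(d-e)$, where some partial reductions degenerate even though $p\nmid d$ keeps the full degree-$d$ reduction separable --- is the delicate part.
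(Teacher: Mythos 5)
Your overall strategy — verify conditions~(1) and~(2) from the introduction via a local analysis at $\t = \infty$, then run the chain $(1)+(2)\Rightarrow(3)\Rightarrow(4)\Rightarrow(5)$ — is exactly the paper's. Your treatment of condition~(1) is also in the spirit of Lemma~\ref{lem:symbolic} and its application in Proposition~\ref{prop:local}: the conjugation by $z\mapsto\pi^{-1}z$, the identification of $d$ disjoint disks around $\mu_d$ each mapped onto the unit disk, and the symbolic-dynamics/contraction picture are all present in the paper, just packaged more abstractly. Your worries about $p\mid e$ or $p\mid(d-e)$ are unfounded: the only separability input used is that $z^d-\t$ is separable (the numerator $a$), and $f'\neq 0$ follows from $p\nmid d$ alone. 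A smaller bug: you assert $e\ge 1$ when discussing small $|w|$, but the theorem allows $e=0$ (which is the family $z^d+\t$ of Remark~\ref{rem:doyle_poonen}); in that case $0$ is not a pole, but it is still carried into the basin of $\infty$, so the conclusion survives even though your reason does not.

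The serious gap is in condition~(2). Non-isotriviality of $f_\t$ does not by itself give $\deg\varphi_Z\to\infty$, and Section~\ref{sec:family} offers no such criterion. Condition~(2) is the statement that for each fixed $\ell$ there are only finitely many dynatomic curves of degree $\ell$ over the $\t$-line; since $\FF_q[\t]$ is infinite, this requires bounding not just $\deg_{X,Y}(H)$ but also $\deg_\t$ of the coefficients $H_i$, and that is where the actual work is. The paper gets this bound from two inputs that your argument never supplies: (i) at every finite place $v\neq\ord_\infty$, all finite preperiodic points are $v$-integral (which follows because $a,b$ are monic over $\FF_q[\t]$ with $\deg a>\deg b+1$, so $|x|_v>1\Rightarrow|f(x)|_v>|x|_v$), forcing $H_0\in\FF_q^\times$; and (ii) the uniform bound $|x|_{\ord_\infty}\le|\t|^{N}$ from Lemma~\ref{lem:symbolic}, which via Newton's identities gives $\deg(H_i)\le \ell N$. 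Without (i) in particular, there is no a priori reason the defining polynomials of low-degree dynatomic curves could not have coefficients of unbounded $\t$-degree, and the count would not terminate. So you need to add the integrality check at finite places (a one-line valuation inequality for this family) and then run the coefficient-bound argument, rather than appeal to bad reduction at $\infty$.
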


\begin{remark}
  \label{rem:doyle_poonen}
By taking $e = 0$ and replacing $\t$ with $-\t$, Theorem~\ref{thm:ubp_rational}
applies to the family $z^d + \t$ with $p \nmid d$, thus allowing us to recover
the main results of \cite{Doyle_Poonen_gonality} in positive characteristic.
\end{remark}

\subsection*{Acknowledgments}
The first author was partially supported by NSF grant DMS-2112697. We thank
Laura DeMarco and Nicole Looper for their feedback on this work, and we thank
the anonymous referee for helpful comments.


\section{The dynamical uniform boundedness conjecture}
\label{sec:background}

Inspired by the strong uniform boundedness conjecture for torsion points on
elliptic curves (later proved by Merel \cite{merel:1996}), Morton and Silverman
posed the following conjecture, which we state in terms of the Strong Uniform
Boundedness Principle.  For a field $K$, write $\Rat_d(K)$ for the set of all
degree-$d$ rational functions defined over $K$.

\begin{conjecture}
[{Dynamical Uniform Boundedness Conjecture; \cite[p. 100]{Morton_Silverman_1994}}]
\label{conj:ubc}
For each integer $d \ge 2$, the family $\Rat_d(\QQbar)$ satisfies the Strong
Uniform Boundedness Principle over $\bbQ$.
\end{conjecture}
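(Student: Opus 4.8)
The plan is to imitate the strategy that resolved the analogous uniform boundedness conjecture for torsion points on elliptic curves, due to Mazur, Kamienny, and Merel, feeding into it the modern uniformity results for rational points on curves. First I would set up the relevant moduli spaces. For a degree-$d$ rational map $f$, the set $\PrePer(f,\QQbar)$ carries the structure of a finite directed graph --- the \emph{preperiodic portrait} of $f$ --- in which each vertex has out-degree one and every connected component is a cycle with trees hanging off it. For a fixed portrait $P$ there is a \emph{dynatomic moduli scheme} $X_{d,P}$, finite over the moduli space $M_d$ of degree-$d$ maps (of dimension $2d-2$), whose $L$-points parameterize a map $f/L$ together with an $L$-rational realization of $P$ inside $\PrePer(f,\QQbar)$. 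Since $\#\PrePer(f,L)$ is controlled by the portraits of $f$ that are defined over $L$, the conjecture would follow from two uniform statements: (a) there are only finitely many portraits $P$ that can occur --- equivalently, the period of any $L$-rational periodic point, the length of any preperiodic tail, and the number of components are all bounded in terms of $d$ and $D = [L:\QQ]$; and (b) for each of those finitely many $P$, the number $\#X_{d,P}(L)$ is bounded uniformly over all $L$ with $[L:\QQ] \le D$.

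For step (a), the cleanest reduction is to the case of a single periodic cycle: if one can show that no $f \in \Rat_d(L)$ has an $L$-rational point of exact period $n$ for $n$ larger than some $N(d,D)$, a separate and more elementary argument bounds tail lengths and component counts, hence the full portrait. Establishing this ``no large periods'' bound is where I expect the real work to lie. Over a function field, the method developed in the present paper --- exhibiting that the period-$n$ dynatomic curves of a one-parameter family have gonality tending to infinity, so that they cannot acquire low-degree rational points --- is exactly the model to follow; the analogue over $\QQ$ would be a statement that the period-$n$ dynatomic modular schemes have geometric complexity growing with $n$, and that this forces the absence of degree-$D$ points for $n \gg 0$. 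Concretely one would try to run a Kamienny--Merel-style formal immersion criterion at a prime $\ell$ of good reduction: reduce the period-$n$ dynatomic scheme modulo $\ell$, pass to a symmetric power or to the associated map to a Jacobian, and show the relevant map is a formal immersion, so that an $L$-rational point of period $n$ with $n$ large would reduce to something impossible over $\overline{\FF}_\ell$.

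For step (b), once the portrait $P$ is fixed, $X_{d,P}$ is a fixed (though possibly high-dimensional) variety over $\QQ$, and one wants $\#X_{d,P}(L)$ bounded uniformly in $L$ of degree $\le D$. When $X_{d,P}$ is a curve this is now available: high gonality, together with the recent uniform Mordell--Lang / uniform Faltings results of Dimitrov--Gao--Habegger and of K\"uhne, bounds the number of degree-$D$ points purely in terms of the genus (hence the gonality) and $D$. For higher-dimensional $X_{d,P}$ one would slice into curves of growing gonality, or invoke the higher-dimensional uniformity statements, using that $X_{d,P}$ is of general type --- or at least that its ``special locus'' is under control --- once $P$ is complicated enough; here one needs a dynamical analogue of the fact that the modular varieties in Merel's theorem acquire no extra rational points from the boundary.

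The main obstacle is step (a), and it is genuinely open: even for the single family of quadratic polynomials $z^2+c$ over $\QQ$, the assertion that there is no rational point of exact period $n \ge 4$ is a well-known conjecture (essentially Poonen's) that has resisted proof, despite the dynatomic curves in that case being completely explicit. Absent such a bound, the moduli machinery of steps (a)--(b) has nothing finite to work with. This is precisely why the present paper, rather than attacking Conjecture~\ref{conj:ubc} directly, proves the corresponding statements over function fields --- where the gonality growth of dynatomic curves \emph{can} be established unconditionally --- and thereby obtains the Strong Uniform Boundedness Principle for new families in that setting; a proof of Conjecture~\ref{conj:ubc} would require, at a minimum, importing an unconditional ``bounded periods'' theorem over $\QQ$ to play the role that gonality plays here.
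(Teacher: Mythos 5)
You have not proved the statement, and you could not have: this is the Morton--Silverman Dynamical Uniform Boundedness Conjecture itself, which the paper records only as motivating background and explicitly describes as wide open. The paper offers no proof of it --- Section~\ref{sec:background} merely surveys the partial results known (Looper's conditional theorem for polynomials under a strong $abc$-hypothesis, Latt\`es maps via Mazur--Kamienny--Merel, maps with bounded bad reduction, Poonen's and Manes's conditional quadratic results, and bounded twist families), and the new theorems of the paper are proved over function fields precisely because the gonality-growth mechanism for dynatomic curves is available there and is not available over $\QQ$.

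Your write-up is candid about this, and your diagnosis of where the difficulty sits is essentially correct, but as a proof it has a fatal gap at exactly the point you flag: step (a), a uniform bound $N(d,D)$ on the exact period of an $L$-rational periodic point for $[L:\QQ]\le D$, is unknown even for the single family $z^2+c$ over $\QQ$ with $D=1$ (Poonen's conjecture that no rational point has exact period $\ge 4$), and no formal-immersion criterion of Kamienny--Merel type is currently known to apply to dynatomic schemes, whose Jacobians lack the rich Hecke/Eisenstein structure that drives the elliptic-curve argument. Step (a) also quietly requires a separate uniform bound on tail lengths and on the number of $L$-rational preperiodic components, which does not follow ``elementarily'' from a period bound alone; and step (b) for higher-dimensional $X_{d,P}$ (general type, control of the special locus, uniform counting of degree-$D$ points) is likewise unestablished. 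So what you have written is a reasonable research program consistent with how the literature views the problem, but it is not a proof, and it does not correspond to any argument in the paper, which proves only function-field cases where gonality growth substitutes for the missing period bound.
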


Conjecture~\ref{conj:ubc} is currently wide open. To illustrate the difficulty
in proving this conjecture, we note that just the $d = 4$ case of
Conjecture~\ref{conj:ubc} is sufficient to prove Merel's theorem, since a point
$P$ on an elliptic curve in Weierstrass form is a torsion point if and only if
its $x$-coordinate is preperiodic under the ``duplication map" $x(P) \mapsto
x(2P)$, which is a degree-$4$ rational function on $\PP^1$.

There has been considerable progress, however, especially when restricting to
polynomial maps. We summarize the current state of affairs by describing subsets
of $\Rat_d(\QQbar)$ for which the (S)UBP has been proven.
\begin{enumerate}
\item Looper has shown that, assuming a strong version of the $abc$-conjecture
  for number fields, the set $\Poly_d(\bar\QQ)$ of degree-$d$ polynomials
  satisfies the UBP over every number field \cite{looper, looper:2021}.
\item A Latt\`es map is an endomorphism of $\PP^1$ of degree at least~2 that is
  covered by an endomorphism of an elliptic curve \cite[\S6.4]{silverman:2007}.
  Using deep results of Mazur, Kamienny, and Merel, one can show that the SUBP
  holds for the set of Latt\`es maps over a number field
  \cite[\S6.7]{silverman:2007}.
\item
  For a number field $K$ and a rational function $f \in K(z)$, we say that $f$
  has {\it good reduction} at the prime ideal $\frakp$ of the ring of integers
  $\calO_K$ if the reduction\footnote{To define the reduction modulo $\frakp$, one should
      normalize $f$ so that the minimum $\frakp$-adic valuation of the
      coefficients is $0$.} of $f$ modulo $\frakp$ has degree equal to the
  degree of $f$. Then for any integer $s \ge 0$, it is known that
  \[
  \hspace*{1.2cm}
   \Rat_{d,s}(\bar\QQ) :=
   \bigcup_{K/\bbQ\text{ finite}}
   \left\{f \in \Rat_d(K)\ :\
   \begin{tabular}{l}
     \text{$f$ has good reduction away} \\
     \text{from a set of $s$ primes of $\calO_K$}
   \end{tabular}
   \right\}
\]
satisfies the SUBP over $\QQ$. There is a substantial literature on this topic;
we mention the articles \cite{benedetto:2007, canci/paladino:2016,
  Morton_Silverman_1994, narkiewicz/pezda:1997}, but we recommend \cite[Remark
  3.16]{silverman:2007} for a more comprehensive list.

\item It was conjectured in \cite{flynn/poonen/schaefer:1997} that a quadratic
  polynomial in $\QQ[z]$ cannot have rational periodic points of period larger
  than $3$, and there is a significant amount of evidence to support this; see
  \cite{flynn/poonen/schaefer:1997, hutz/ingram:2013, morton:1998,
    stoll:2008}. Poonen proved that the set
\[
   \cF :=
   \left\{f \in \QQ[z]\ :\
   \begin{tabular}{l}
     \text{$\deg(f) = 2$ and $f$ has no rational} \\
     \text{point of period greater than $3$}
   \end{tabular}
   \right\}
\]
satisfies the UBP over $\QQ$ \cite{poonen:1998}. In fact, the explicit bound
$\#\PrePer(f,\QQ) \le 9$ is given for $f \in \cF$.

\item Recall that the \textbf{automorphism group} of a rational function $f \in
  K(z)$ is the subgroup of elements $g \in \PGL_2(\bar K)$ such that $g^{-1}
  \circ f \circ g = f$. Manes has proved that the set
\[
   \cF :=
   \left\{f \in \QQ(z)\ :\
   \begin{tabular}{l}
     \text{$\deg(f) = 2, \ \Aut(f) \ne 1$, and $f$ has no} \\
     \text{rational point of period greater than $4$}
   \end{tabular}
   \right\}
\]
satisfies the UBP over $\QQ$ \cite{Manes_Quadratic_2008}. Moreover, Manes showed
that the bound ${\#\PrePer(f,\QQ) \le 12}$ holds for all $f \in \cF$.

\item For a fixed number field $K$ and rational function $f \in K(z)$ of degree
  at least $2$, let $[f]$ denote the set of all rational functions in $K(z)$
  that are $\PGL_2(\Kbar)$-conjugate to $f$. Levy, Manes, and Thompson have
  shown that $[f]$ satisfies the UBP over $K$
  \cite{levy/manes/thompson:2014}.\footnote{The content of this statement comes
    from the fact that a rational function $f$ may admit infinitely many nontrivial
    \textbf{twists}: rational functions conjugate to $f$ over $\Kbar$, but not
    over $K$.}
\end{enumerate}

We also recommend \cite{doyle:2018quad, doyle:2020,
  doyle/faber/krumm:2014, ingram:2019} for additional results toward uniform
boundedness over number fields.

A version of Conjecture~\ref{conj:ubc} for function fields is also believed to
be true, once one removes certain obvious sources of counterexamples. Recall that elements of
the group $\PGL_2$ act on rational functions by conjugation.

\begin{conjecture}
[Dynamical Uniform Boundedness Conjecture for function fields]
\label{conj:ubcff}
Let $k$ be a field, and let $K$ be the function field of an integral curve over
$k$. For each integer $d \ge 2$, the family
\[
\Rat_d(\bar K) \smallsetminus \PGL_2(\bar K).\Rat_d(\bar k)
\]
satisfies the Strong Uniform Boundedness Principle over $K$.
\end{conjecture}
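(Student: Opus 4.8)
The plan would be to push the program of this paper from one‑parameter families up to the full moduli space $M_d = \Rat_d/\PGL_2$, reducing Conjecture~\ref{conj:ubcff} to a uniform statement about points of bounded degree on dynatomic curves. After spreading out, one may assume $k = \FF_q$ is finite and $K$ is a function field over $\FF_q$; the task is then to bound $\#\PrePer(f,L)$ for every extension $L/K$ with $[L:K] \le D$ and every $f$ whose $\PGL_2$‑conjugacy class does not lie over $\bar k$. A map $f$ together with an $L$‑rational preperiodic point of tail length $m$ and period $n$ determines an $L$‑point of a generalized dynatomic modular variety $\mathrm{Dyn}_{m,n} \to M_d$, and bounding $\#\PrePer(f,L)$ uniformly is equivalent to bounding, uniformly in $(m,n)$ and in $f$, the number of such marked points. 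As in \cite{Doyle_Poonen_gonality}, one splits the problem into (a) finitely many ``small'' pairs $(m,n)$, to be handled by direct geometric arguments on $M_d$, and (b) all sufficiently large $(m,n)$, where the bound needed is that there are \emph{no} such marked points at all outside the constant locus.

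For part (b) I would cover $M_d$ by one‑parameter subfamilies $f_\t$ — for instance pencils of lines in a projective model of $\Rat_d$, normalized near a suitable degenerate reduction — chosen so that every conjugacy class in $\Rat_d(\bar K) \smallsetminus \PGL_2(\bar K).\Rat_d(\bar k)$ lies on some such slice. On a fixed slice, the dynatomic curves $Z$ of Section~\ref{sec:dynatomic} carry the forgetful maps $\varphi_Z \colon Z \to \PP^1$, and the goal is to verify properties (1) and (2) of the introduction for that slice: bounded degree of definition and ramification index in $\varphi_Z^{-1}(\infty)$, and $\deg \varphi_Z \to \infty$ as $Z \to \infty$. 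Granting these, the chain $(1)+(2) \Rightarrow (3) \Rightarrow (4) \Rightarrow (5)$ of \cite{Doyle_Poonen_gonality} shows that the $\FF_q$‑gonality of $Z$ tends to infinity, and the resulting bound on points of bounded degree forces each sufficiently large $Z$ to have no points of degree $\le D$ over $\FF_q(\t)$ off the constant locus. Combined with the finitely many small types from (a), this produces a bound $B(D)$ valid for all $\s \in \bar K$ lying on that slice. Theorems~\ref{thm:ubp_polynomials} and~\ref{thm:ubp_rational} are precisely the cases in which this slice‑level analysis has been carried out.

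The main obstacle is uniformity \emph{across} the slices. One needs a cover of $M_d(\bar K)$ minus the constant locus by one‑parameter families that, first, all satisfy the hypotheses forcing (1) and (2) — the current explicit criteria, such as the degree inequality of Theorem~\ref{thm:ubp_polynomials} or the condition $p \nmid d$ in Theorem~\ref{thm:ubp_rational}, are far too restrictive for this, and in particular leave open polynomial‑type maps with $p \mid d$ beyond the families treated here, as well as general non‑polynomial maps — and, second, yield a bound independent of which slice is used. This second requirement is the genuinely new difficulty: it asks for uniform control of the ramification profile of $\varphi_Z^{-1}(\infty)$ and of the genus of $Z$ as the parameter curve itself varies in a $(2d-3)$‑dimensional family, that is, a two‑parameter degeneration analysis in place of the one‑parameter analysis the method currently supports. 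Until such a uniform degeneration analysis is available, Conjecture~\ref{conj:ubcff} remains open, and the theorems above are best viewed as the first instalments of this program — establishing the conclusion precisely on the slices where the degeneration at infinity can be controlled.
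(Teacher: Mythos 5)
There is nothing in the paper to compare your argument against: the statement you were asked about is Conjecture~\ref{conj:ubcff}, which the paper explicitly presents as the (open) function-field analogue of the Morton--Silverman conjecture and does not prove. The paper only establishes special cases --- the one-parameter families of Theorem~\ref{thm:ubp_polynomials}, Corollary~\ref{cor:quadratic}, and Theorem~\ref{thm:ubp_rational} --- via Propositions~\ref{prop:local}, \ref{prop:SUBP}, and \ref{prop:technical}. Your submission is accordingly not a proof but a research program, and you say as much in your final paragraph (``Conjecture~\ref{conj:ubcff} remains open''). As a proof it therefore has genuine gaps, and it is worth naming them precisely rather than leaving them as a general appeal to ``uniformity across slices.''

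First, the opening reduction ``after spreading out, one may assume $k = \FF_q$ is finite'' is not justified and is where the method's real limitation sits: the conjecture is stated for an arbitrary field $k$, including characteristic zero, whereas the entire pipeline of Sections~\ref{sec:sufficient}--\ref{sec:family} (finitely many dynatomic polynomials of bounded degree over $\FF_q[\t]$, counting $\FF_{q^\rd}$-points to force gonality growth) fundamentally uses a finite constant field; Proposition~\ref{prop:SUBP} only passes from $\FF_q$ to fields \emph{containing} $\FF_q$, not from characteristic zero down to finite fields. Second, for a given non-isotrivial $f \in \Rat_d(\bar K)$ you never explain how to choose a one-parameter slice through $f$ on which the hypotheses of Proposition~\ref{prop:local} (attracting fixed point at $\infty$ at $v = \ord_\infty$ with $f^{-1}(Y)$ a union of $d$ disjoint disks, and integrality of finite preperiodic points at all other places) can be arranged; these are exactly the conditions that currently restrict the method to the special families of Section~\ref{sec:family}, and no argument is offered that such slices cover $\Rat_d(\bar K) \smallsetminus \PGL_2(\bar K).\Rat_d(\bar k)$. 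Third, even granting slice-wise success, the bound $B$ produced by the implication chain $(1)+(2)\Rightarrow(3)\Rightarrow(4)\Rightarrow(5)$ depends on the constants $\rd$, $\ri$, $N$ of the chosen slice, and you give no mechanism forcing these to be bounded as the slice varies over a $(2d-2)$-dimensional moduli space; your own text identifies this as ``the genuinely new difficulty'' but does not address it. So the proposal correctly recognizes the statement as open and sketches a plausible extension of the paper's strategy, but none of its essential steps is carried out, and it cannot stand as a proof of Conjecture~\ref{conj:ubcff}.
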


We must remove the rational functions that are conjugate to an element of $\bar
k(z)$ --- the so-called \textbf{isotrivial} functions --- because they can have
infinitely many preperiodic points if the field $k$ is infinite.

Let $k$ be a field, and let $K$ be a function field over $k$.
\begin{enumerate}
\item Looper showed that if $k$ has characteristic zero and a strong version of
  the $abc$-conjecture holds for $K$, then the family
  \[
  \Poly_d(\Kbar) \smallsetminus \mathrm{Aff}_2(\bar K).\Poly_d(\bar k)
  \]
  satisfies the UBP over $K$ \cite{looper, looper:2021}. Here
  $\mathrm{Aff}_2(\bar K)$ is the subgroup of $\PGL_2(\bar K)$ that preserves
  the point at infinity.

\item The SUBP holds for the set of Latt\`es maps over $K$ that do not arise
  from an elliptic curve whose $j$-invariant is algebraic over $k$
  \cite{nguyen-saito,Poonen_gonality}.
  
 \item Benedetto showed that the space $\Poly_{d,s}(\Kbar)$ of degree-$d$
   polynomials with good reduction away from a set of $s$ places satisfies the
   SUBP over $K$ \cite{benedetto:2005}. Assuming $k$ has characteristic zero,
   Canci has shown that $\Rat_{d,s}\big(k(\T)\big)$ satisfies the UBP over
   $k(\T)$ \cite{canci:2015}.
   
  \item The first author and Poonen showed that if the characteristic of $k$
    does not divide $d$, then the family
\[
    \calF(d) := \{z^d + c : c \in \Kbar \smallsetminus \kbar\}
\]
satisfies the SUBP over $K$ \cite{Doyle_Poonen_gonality}. (Compare
Remark~\ref{rem:doyle_poonen}.)
\end{enumerate}

  
\section{Sufficient local conditions for the SUBP}
\label{sec:sufficient}

We begin with a technical lemma that is restrictive in its hypotheses, but still
general enough to aid with all of our examples. Then we use local considerations
to derive consequences for the dynatomic curves associated to a one-parameter
family.


\subsection{Nonarchimedean preliminaries}

\begin{lemma}
  \label{lem:finite}
  Let $k$ be a nonarchimedean field with nontrivial absolute value $|\cdot|$.
  Suppose that $D \subset k$ is an open disk and $f \colon D \to D$ is an analytic map
  such that $f(D) \subsetneq D$, and such that $D$ contains a fixed point of
  $f$. Then $f$ has finitely many preperiodic points, and each preperiodic point
  eventually maps to the fixed point.
\end{lemma}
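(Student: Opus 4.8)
The plan is to normalize the fixed point to the origin, use the strict inclusion $f(D) \subsetneq D$ to upgrade $f$ to a genuine contraction toward $0$ on a slightly smaller closed disk, and then read off both conclusions by elementary ultrametric estimates. Concretely, I would first replace $f$ by $x \mapsto f(x+a) - a$, where $a$ is the given fixed point; since every point of a nonarchimedean disk is a center, this reduces us to the case $D = \{x : |x| < r\}$ with $f(0) = 0$, so that $f(x) = \sum_{n \ge 1} c_n x^n$ on $D$. (If $k$ is not algebraically closed I would base change to $\widehat{\overline{k}}$ for the next step; this only enlarges the set of preperiodic points, so the statement descends, provided the hypotheses persist over $\widehat{\overline{k}}$ --- the subtle point flagged at the end.)

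For the core step, recall that a nonconstant analytic map carries an open disk onto a disk; since $0 = f(0)$ lies in the image, $f(D)$ is a disk centered at $0$, say of radius $\rho^{*}$, and the hypothesis $f(D) \subsetneq D$ forces $\rho^{*} < r$ (an open disk of radius $r$ about $0$ is all of $D$). As the maximum modulus of $f$ on $\{|x| \le \rho\}$ equals $\max_n |c_n|\rho^n$ and $f(\{|x| \le \rho\}) \subseteq f(D)$, we get $\max_n |c_n|\rho^n \le \rho^{*}$ for all $\rho < r$. Fix $t$ with $\rho^{*} < t < r$ and set $D' = \{|x| \le t\}$. Then for every $x \in D'$,
\[
  |f(x)| \;=\; |x|\cdot\Bigl|\sum_{n \ge 1} c_n x^{n-1}\Bigr|
  \;\le\; |x|\cdot\max_{n \ge 1}|c_n|\,t^{n-1}
  \;=\; \frac{\max_n|c_n|\,t^n}{t}\,|x|
  \;\le\; \frac{\rho^{*}}{t}\,|x|,
\]
so $|f(x)| \le \lambda|x|$ with $\lambda := \rho^{*}/t \in (0,1)$; moreover $f(D) \subseteq \{|y| \le \rho^{*}\} \subseteq D'$. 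Hence $f$ maps $D'$ into itself, and every $f$-orbit in $D$ enters $D'$ after a single step.

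The remainder is mechanical. For $x \in D$ one has $|f^{n}(x)| \le \lambda^{n-1}|f(x)| \to 0$, so $0$ is the only periodic point of $f$ in $D$ --- a point of exact period $m$ lies in $D'$ and satisfies $|x| \le \lambda^{m}|x|$ --- and in particular the unique fixed point; every preperiodic $x \in D$ therefore has a finite orbit eventually entering the cycle $\{0\}$, i.e.\ $f^{N}(x) = 0$ for some $N$. For finiteness, note that $f$ has finite fibers (in the applications it is a rational function; in any case $f|_{D'}$ and its iterates lie in a Tate algebra, so Weierstrass preparation applies), and let $c > 0$ be the least absolute value of a nonzero zero of $f$ in $D'$ (put $c = t$ if there is none). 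If $x \in D$ is preperiodic and nonzero, pick $N \ge 1$ minimal with $f^{N}(x) = 0$; when $N \ge 2$, the point $f^{N-1}(x)$ is a nonzero zero of $f$ lying in $D'$, so $c \le |f^{N-1}(x)| \le \lambda^{N-2}\rho^{*}$, which bounds $N$ by an explicit $M = M(\lambda, c, \rho^{*})$. Thus the set of preperiodic points of $f$ in $D$ is the finite union $\bigcup_{n \le M} f^{-n}(0)$, each element of which maps to $0$ under iteration.

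The main obstacle is the contraction step itself: deducing the quantitative bound $\rho^{*} < r$ from $f(D) \subsetneq D$ (rather than the weaker $f(D) \subseteq D$), together with making sure that the passage to the closed disk $D'$, the identification of $f(D)$ as a disk, and --- if one base changes --- the survival of the hypotheses over a field that need be neither algebraically closed nor maximally complete are all legitimate. Once that is in place, the rest is routine ultrametric calculus.
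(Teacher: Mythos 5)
Your proposal is correct and takes essentially the same route as the paper: normalize the fixed point to the origin, extract a contraction constant $\lambda<1$ from $f(D)\subsetneq D$, iterate to show every orbit converges to $0$, and use Weierstrass preparation to isolate $0$ as a zero of $f$ and thereby cap the number of backward steps, so that the preperiodic set is a finite union $\bigcup_{n\le M} f^{-n}(0)$. The cosmetic differences --- you derive $\lambda$ via the radius $\rho^*$ of the image disk while the paper directly bounds $\sup_n|a_n|<1$, and you bound the least $N$ with $f^N(x)=0$ via the smallest nonzero root of $f$ while the paper fixes $j$ with $r^j<\varepsilon$ and shows $f^j(x)=0$ --- are genuinely equivalent. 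The base-change subtlety you flag is real if one reads ``$f(D)\subsetneq D$'' as a statement about $k$-points (e.g.\ $f(z)=z^2$ on $D(0,1)^-$ over $\QQ_p$ has $f(D)\subsetneq D$ over $\QQ_p$ but $f(D)=D$ over $\CC_p$), but the paper elides it with the same unjustified ``WLOG $k$ algebraically closed''; in the one place the lemma is invoked (inside Lemma~\ref{lem:symbolic}), the strict inclusion is established at the level of Berkovich disks, so the issue does not bite.
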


\begin{proof}
  Without loss of generality, we may assume that $k$ is algebraically
  closed. After a possible change of coordinate, we may assume that $D =
  D(0,1)^-$ is the open unit disk and $f(0) = 0$. Write $f(z) = \sum_{n \ge 1} a_n
  z^n$ with $a_n \in k$. Since $f(D) \subsetneq D$, there is $r < 1$ such that
  $|a_n| \le r$ for all $n \ge 0$. For $x \in D$, we have 
  \[
     |f(x)| = |x| \cdot \left| \sum_{n \ge 1} a_n x^{n-1} \right| \le |x|
     \max_{n \ge 1} |a_nx^{n-1}| \le r|x|.
  \]
  By induction, we find $|f^j(x)| \le r^j|x|$ for all $j \ge 1$, and hence,
  every point in $D$ converges to $0$ under iteration. In particular, every
  preperiodic point must eventually map to $0$.

  By Weierstrass Preparation, an analytic function on an affinoid domain has
  only finitely many zeros. Thus, there is $\varepsilon > 0$ such that the only
  solution to $f(z) = 0$ in the disk $D(0,\varepsilon)^-$ is the origin. Fix a
  positive integer~$j$ such that $r^j < \varepsilon$. Then ${|f^j(x)| \le r^j|x|
    < \varepsilon}$. So if $x$ is preperiodic for $f$, then we must have $f^j(x)
  = 0$. Since $j$ depends only on $f$, we conclude there are finitely many
  preperiodic points for $f$, and they all eventually map to the fixed point in
  $D$.
\end{proof}

\begin{lemma}
  \label{lem:symbolic}
  Let $k$ be a complete nonarchimedean field with nontrivial absolute value
  $|\cdot|$.  Let $f \in k(z)$ be a separable rational function of degree $d >
  1$ with an attracting fixed point at $\infty$. Write $D_\infty$ for the
  maximal open disk in the immediate basin of $\infty$, and write $Y$ for the
  complement of $D_\infty$. Assume that $f^{-1}(Y)$ is a disjoint union of $d$
  closed disks. Then the following conclusions hold:
  \begin{itemize}
    \item Every preperiodic point either eventually maps to $\infty$ or else
      lies in the Julia set for $f$.
    \item There exists a finite extension $k' / k$ such that every preperiodic
      point for $f$ lies in $\PP^1(k')$.
    \item All of the finite preperiodic points for $f$ lie inside a disk about
      the origin.
  \end{itemize}
\end{lemma}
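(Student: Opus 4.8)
The plan is to realize the action of $f$ on the complement of the immediate basin as a full one-sided shift, and then to read all three conclusions off from that picture. For the geometry: since $\infty$ lies in its own immediate basin we have $f(D_\infty)\subseteq D_\infty$, so taking complements $f^{-1}(Y)\subseteq Y$, and the closed sets $Y_n:=f^{-n}(Y)$ form a decreasing chain with intersection $K:=\bigcap_n Y_n$. Writing $f^{-1}(Y)=V_1\sqcup\dots\sqcup V_d$, openness of $f$ makes each $f(V_i)$ a nonempty clopen subset of the connected set $Y$, hence $f(V_i)=Y$; since there are exactly $d=\deg f$ pieces, each $f|_{V_i}\colon V_i\to Y$ has degree one, and by separability it is an analytic isomorphism of closed disks. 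In particular $f$ is unramified over $Y$ (all ramification lies over $D_\infty$), each $V_i\subsetneq Y$, and since an analytic isomorphism of disks scales radii by a fixed constant there is $\lambda<1$ with every branch $f|_{V_i}^{-1}$ contracting distances by the factor $\lambda$. Iterating, $Y_n$ is a disjoint union of $d^n$ closed disks of radius at most $\lambda^n\,\mathrm{rad}(Y)$; hence $K$ is a totally disconnected closed set, $f^{-1}(K)=K$, and the itinerary map $x\mapsto(i_0,i_1,\dots)$ with $f^n(x)\in V_{i_n}$ conjugates $f|_K$ to the full one-sided shift on $d\ge 2$ symbols.

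For the first bullet I would first identify $K$ with the Julia set. Every orbit that ever meets $D_\infty$ converges to $\infty$, and conversely every orbit converging to $\infty$ eventually meets the open set $D_\infty$, so $\PP^1\smallsetminus K=\bigcup_n f^{-n}(D_\infty)$ is precisely the attracting basin $\mathcal{B}$ of $\infty$. This set is open, completely invariant, and the iterates converge locally uniformly to $\infty$ on it, so $\mathcal{B}$ is contained in the Fatou set; it also exhausts the Fatou set, since any Fatou component disjoint from $\mathcal{B}$ would lie inside a single component of every $Y_n$ and so have diameter at most $\lambda^n\,\mathrm{rad}(Y)\to 0$, which is impossible for a nonempty open set. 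Hence the Julia set is exactly $K$. Now if $x$ is preperiodic, then either some $f^n(x)$ lies in $D_\infty$ --- in which case the finite forward orbit of $x$ converges to, hence contains, $\infty$ --- or else $f^n(x)\in Y$ for all $n$, i.e.\ $x\in K$ is a Julia point.

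For the remaining two bullets I would fix a finite extension $k'/k$ large enough to contain the finitely many finite poles of $f$ and to split $f^{-1}(Y)$ into its geometric components $V_1,\dots,V_d$. Since $f$ is defined over $k$ and each $V_i,Y$ are $k'$-rational, each inverse branch $f|_{V_i}^{-1}\colon Y(k')\to V_i(k')\subseteq Y(k')$ is $k'$-rational. A period-$b$ point $p\in K$ has a $b$-periodic itinerary and is the unique fixed point of the composition of inverse branches along it, a $\lambda^b$-contraction of the complete space $Y(k')$; so $p\in Y(k')$, and running any finite string of inverse branches keeps us in $Y(k')$, whence every preperiodic point in $K$ lies in $\PP^1(k')$. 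For a preperiodic point eventually landing on $\infty$, an induction on the number of steps to $\infty$ --- using $f^{-1}(Y)\subseteq Y$ together with the $k'$-rational branches --- reduces everything to the claim that no pole of $f$ other than $\infty$ lies in $D_\infty$; granting that, such points also lie in $Y(k')$. This proves the second bullet, and since every finite preperiodic point then lies in the bounded disk $Y$, the third follows at once.

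The one step I expect to demand genuine care is the claim that the finite poles of $f$ lie in $Y$, not in $D_\infty$. It is immediate in every application we have in mind --- for a polynomial $f^{-1}(\infty)=\{\infty\}$, and for $f_\t(z)=(z^d-\t)/z^e$ the only finite pole is $0$, which is well inside $Y$ --- but for a general $f$ I would argue from the maximality of $D_\infty$: a pole $p\in D_\infty$ would force $f|_{D_\infty}$ to be a map of degree $\ge 2$ from the disk $D_\infty$ onto a sub-disk containing $\infty$, and pulling this configuration back exhibits a forward-invariant disk in the immediate basin strictly larger than $D_\infty$, contradicting its definition. Turning this sketch into a rigorous argument is, I believe, the principal obstacle.
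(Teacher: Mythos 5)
The first half of your argument — realizing the complement $K=\bigcap_n f^{-n}(Y)$ as a shift space, and deducing the first bullet — is in the same spirit as the paper, which delegates exactly this analysis to Proposition~4.1 of \cite{Kiwi_Quadratic_Puiseux_2014} rather than redoing it.  The real divergence, and the gap, is in your treatment of the last two bullets.  You try to show that \emph{every} preperiodic point other than $\infty$ lies in $Y$, and you correctly observe that this reduces to the claim that $f$ has no finite pole in $D_\infty$.  Your argument for that claim does not work.  If $D'=f(D_\infty)\subsetneq D_\infty$, then the component of $f^{-1}(D')$ containing $\infty$ is $D_\infty$ itself and nothing bigger: indeed $\zeta_Y=\partial D_\infty$ maps to $\partial D'$, which is not in $D'$, so $\zeta_Y\notin f^{-1}(D')$ and the pullback stops at $D_\infty$.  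The ``forward-invariant disk strictly larger than $D_\infty$'' you want to exhibit does not materialize; the maximality of $D_\infty$ in fact \emph{forces} the boundary to be achieved (this is the content of the paper's argument that the radius of $Y$ lies in $|\bar k^\times|$), which is the opposite of what you need.  You were right to flag this step as the principal obstacle, but the sketch offered is not rescuable along those lines.

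The paper avoids the issue entirely, and it is worth seeing how.  Rather than trying to show that $D_\infty$ contains no preperiodic point other than $\infty$, the paper applies Lemma~\ref{lem:finite} to the analytic map $g(z)=1/(f(b+1/z)-b)$ on the coordinate-changed disk $D_0$.  A finite pole $p$ of $f$ inside $D_\infty$ does \emph{not} obstruct this: it simply becomes an extra zero of $g$ in $D_0$, and Lemma~\ref{lem:finite} is stated (and proved, via Weierstrass preparation) for analytic self-maps $g\colon D\to D$ with possibly many zeros.  The output is that the set $\calP$ of preperiodic points $x\neq\infty$ with $f(x)\in D_\infty$ is \emph{finite}, whence bounded — which is all that the third bullet needs — and these finitely many points are adjoined directly into $k'$, settling the second bullet for the $D_\infty$-side.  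Your stronger assertion that $\calP\subseteq Y$ is not established, may well be false in general, and is in any case unnecessary.  To repair the proof you should drop the pole claim and instead invoke Lemma~\ref{lem:finite} (or reprove it) for the self-map of $D_\infty$; then the induction you run with the $k'$-rational inverse branches on $Y$ handles the remaining preperiodic points exactly as you describe.
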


\begin{proof}
  First, we argue that $D_\infty$ is well defined. Take $D_\infty$ to be the
  union of all open disks $D$ about $\infty$ such that $f^n(D)$ converges to
  $\infty$ as $n$ grows. There exists at least one such disk since $\infty$ is
  an attracting fixed point, so $D_\infty$ is nonempty. Since $f$ has $d+1 > 1$
  fixed points counted with multiplicity, and since an attracting fixed point
  must have multiplicity $1$, there is some fixed point of $f$ not in
  $D_\infty$. It follows that $D_\infty \ne \PP^1_k$, and hence $D_\infty$ is an
  open disk.
  
  Now we show that $f(D_\infty) \subset D_\infty$. Since there is a fixed point
  of $f$ that does not lie in $D_\infty$, we see that $f(D_\infty) \ne \PP^1_k$.
  Thus $f(D_\infty)$ is an open disk that contains $\infty$, and that is
  contained in the basin of $\infty$. By maximality, it follows that
  ${f(D_\infty) \subset D_\infty}$.

  Set $Y = \PP^1_k \smallsetminus D_\infty$. Then $Y$ is a closed disk. Write
  $f^{-1}(Y) = X_1 \cup \cdots \cup X_d$, where each $X_i$ is a closed disk. We
  claim that each $X_i$ is properly contained in $Y$. Take $x \in \bigcup
  X_i$. If $x \not\in Y$, then $x \in D_\infty$. By the preceding paragraph,
  $f(D_\infty) \subset D_\infty$, so we find that $f(x) \not\in Y$, a
  contradiction. So each $X_i \subset Y$. If $X_i = Y$ for some $i$, then $X_j
  \subset X_i$ for $j \ne i$, which contradicts the hypothesis that the $X_j$
  are pairwise disjoint. Thus, $X_i \subsetneq Y$.

  Next we argue that $f(D_\infty) \ne D_\infty$. Since the $X_i$ are disjoint,
  we may enlarge each disk slightly to obtain disks $X_i' \subset Y$ that are
  pairwise disjoint and contain no pole of $f$, and such that $Y \subsetneq
  f(X_i')$. Choose any $x \in D_\infty$ that lies in the intersection of the
  $f(X_i')$. Then $f^{-1}(x)$ consists of $d$ distinct points inside $Y$. In
  particular, $x \in D_\infty \smallsetminus f(D_\infty)$.

  Write $Y = D(b,r)$, the disk of radius $r$ about some $b \in \bar k$. We now
  argue that $r \in |\bar k^\times|$. Let us change coordinates in order to move
  $D_\infty$ to $D_0 := D(0,1/r)^-$. More precisely, set
  \[
      g(z) = \frac{1}{f(b+1/z) - b}.
  \]
  Then the previous paragraph shows $g$ maps $D_0$ strictly into itself. Now $g$
  is given by a series $\sum g_n z^n$ on $D_0$. If the radius of convergence of
  this series were larger than $1/r$, then by continuity, there would be a
  slightly larger disk $D_0' \supset D_0$ such that $g(D_0') \subset D_0$. This
  would violate maximality of $D_\infty$. Thus, the radius of convergence of the
  series is $1/r$. But $g$ is a rational function, so the obstruction to
  extending the domain of convergence of the series is a pole of $g$. As every
  pole lies in $\bar k^\times$, we see that $r \in |\bar k^\times|$, as desired.

  We may now define the extension $k' / k$. Take $Y = D(b,r)$ as before. Let
  $a_1, \ldots, a_d \in \bar k$ be the distinct solutions to $f(z) = b$. Then
  $X_i = D(a_i,r_i)$. Choose $c_i \in \bar k$ such that $|c_i| = r_i$; this is
  possible since the radius of $Y$ lies in $|\bar k^\times|$. Define
  \[
  \calP = \{x \in \PP^1(\bar k) \smallsetminus \{\infty\} \colon f(x) \in D_\infty
  \text{ and $x$ is preperiodic for $f$}\}.
  \]
  Since $f(D_\infty) \subsetneq D_\infty$, Lemma~\ref{lem:finite} shows that the
  set $\calP$ is finite. (This proves the final conclusion of the lemma.) Define
  $k'$ to be the extension of $k$ given by adjoining $\calP \cup \{a_1, \ldots,
  a_d, c_1, \ldots, c_d\}$.

  In general, if $g \colon D \to D'$ is a separable injective $k'$-analytic map
  from a disk $D$ onto a disk $D'$, then the inverse of $g$ exists as an
  analytic function and is defined over $k'$. Applying this observation to each
  of the inverses ${h_i \colon Y \to D(a_i, r_i)}$, we see that the solutions to
  $f^n(z) = \infty$ are $k'$-rational for every $n \ge 0$. Additionally,
  this setup allows us to use the argument in the proof of Proposition~4.1 of
  \cite{Kiwi_Quadratic_Puiseux_2014} to conclude that the Julia set of $f$ is
  contained inside $\PP^1(k')$, and that the Fatou set of $f$ is the immediate
  basin of attraction of $\infty$. In particular, every preperiodic point for
  $f$ either lies in the Julia set or else eventually maps to $\infty$, and all
  such preperiodic points lie in $\PP^1(k')$. 
\end{proof}

\begin{remark}
  The strategy of Proposition~4.1 in \cite{Kiwi_Quadratic_Puiseux_2014} shows
  that the dynamics of $f$ on the Julia set $\cJ(f)$ is conjugate to the
  left-shift map on the space of sequences of $d$ symbols. The map is given by
  sending $x \in \cJ(f)$ to its itinerary $(i_0, i_1, i_2, \ldots)$, where
  $f^j(x) \in X_{i_j}$ for each~${j \ge 0}$.
\end{remark}

\begin{remark}
  We can rephrase the hypothesis of the lemma in terms of ramification of the
  morphism of associated analytic spaces. If $k$ is a complete nonarchimedean
  field, we write $\Berk^1_k$ for the associated analytic space in the sense of
  Berkovich. For a rational function $f \in k(z)$, we write $\cR_f$ for the
  ramification locus of $f$ inside $\Berk^1_k$. If $f$ is separable, then $\cR_f
  \ne \Berk^1_k$ and $f$ is locally injective on $\Berk^1_k \smallsetminus
  \cR_f$. By \cite[Thm.~6.3.2]{Berkovich_Etale_1993}, $f^{-1}(Y)$ is a disjoint
  union of $d$ disks if and only if $f(\cR_f) \subset \mathbf{D}_\infty$. Here
  $\mathbf{D}_\infty$ is the closure of $D_\infty$ inside $\Berk^1_k$. See
  \cite{Faber_Berk_RamI_2013} for additional mapping properties related to the
  ramification locus.
\end{remark}


\subsection{Dynatomic curves}
\label{sec:dynatomic}
Let $k$ be any field. Given a family of rational functions $f_\t(z) \in
k(\t)(z)$ with $\deg_z(f_\t) = d \ge 2$, one can define a dynamical analogue of
the classical modular curves, typically referred to as \textbf{dynatomic
  curves}. First, write $f_\t(z) = a(z) / b(z)$ with $a, b \in
k[\t][z]$. Without loss of generality, we may assume that $\gcd_z(a,b) = 1$ and
that the coefficients of $a$ and $b$ have no common factor in $k[\t]$ of
positive degree. This determines $a, b$ up to a common scalar in $k^\times$. Set
$A(X,Y) = Y^da(X/Y)$ and $B(X,Y) = Y^d b(X/Y)$, and let $F = (A,B)$ be the
induced family of morphisms on $\PP^1_k$. Inductively define polynomials $A_m$
and $B_m$ by
\[
  A_0 = X, \quad B_0 = Y, \quad A_m = A_{m-1}(A,B), \quad B_m = B_{m-1}(A,B).
  \]
We set $F^m = (A_m, B_m)$; this corresponds to the $m$-th iterate of the
morphism~$F$.
Let $H$ be an irreducible factor of
\[
A_{m+n}B_m - A_{m}B_{m+n} \in k[\t][X,Y]
\]
for some $m \ge 0$ and $n \ge 1$. Then $H$ is a polynomial in $X,Y,\t$,
homogeneous in $X$ and $Y$, and so its vanishing defines an algebraic curve
inside $\PP^1_k \times \Aff^1_k$. In this paper, a \textbf{dynatomic curve} $Z$
is the normalization of the projective closure of the curve $\{H = 0\}$. Two
dynatomic curves are considered distinct if they have different associated
polynomials.  The rational function $\t$ gives a morphism $\varphi_Z \colon Z \to
\PP^1$; by the \textbf{degree of $Z$} we will mean the degree of the morphism
$\varphi_Z$. For a field extension $K/k$ and $\s \in \PP^1(K)$, elements of the
fiber $\varphi_Z^{-1}(\s)$ correspond to points $x \in \PP^1(\bar K)$ which
satisfy $f_\s(x)^{m+n} = f_\s^m(x)$. (They may also satisfy this equation for
smaller values of $m$ and $n$.)

For each place $v$ of $k(\t)$, we can form the associated completion $k(\t)_v$
with respect to $v$. These are fields of formal Laurent series. For example, if
$v = \ord_\infty$, then $k(\t)_v = k\Ls{1/\t}$, while if $v = \ord_a$ for some
$a \in k$, then $k(\t)_v = k\Ls{\t-a}$. Given a family $f_\t \in k(\t)(z)$ of
rational functions parameterized by $\t$, we abuse notation and write $f = f_\t$
for the associated rational function defined over the function field $k(\t)$ or
any of its completions $k(\t)_v$.

\begin{proposition}
  \label{prop:local}
  Let $f_\t \in \FF_q(\t)(z)$ be a separable family of rational functions
  satisfying ${d := \deg_z(f_\t) > 1}$. Suppose the following hypotheses hold:
  \begin{itemize}
    \item At the place $v = \ord_\infty$, the point at $\infty$ is an attracting
      fixed point for $f$, and the pre-image of the complement of the maximal
      open disk in the immediate basin of $\infty$ is a union of $d$ disjoint
      closed disks.
    \item At each place $v \ne \ord_\infty$, the finite preperiodic points of
      $f$ are integral.
  \end{itemize}
  Then $\infty$ is a superattracting fixed point for $f$, and the following
   also hold:
  \begin{enumerate}
  \item\label{item:rs} There exist integers $\rd, \ri \ge 1$ such that for any
    dynatomic curve $Z$, the points of the fiber of ${\varphi_Z \colon Z \to
      \PP^1}$ over $\infty$ all lie in $Z(\FF_{q^\rd})$ and have ramification
    index at most $\ri$.
  \item The degrees of the dynatomic curves for $f_\t$ tend to infinity as $Z
    \to \infty$.
  \item The gonalities of the dynatomic curves for $f_\t$ tend to infinity as $Z
    \to \infty$.
  \item For each dynatomic curve $Z$, the irreducible components of the base
    extension $Z_{\FF_{q^\rd}}$ are geometrically irreducible, where $\rd$ is the
    integer in (\ref{item:rs}).
  \end{enumerate}
\end{proposition}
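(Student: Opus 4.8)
The plan is to prove the four conclusions in the stated order, with Lemma~\ref{lem:symbolic} doing the work at $\ord_\infty$ and the integrality hypothesis entering essentially only in~(2). First I would check that $\infty$ is superattracting: if its multiplier $\lambda\in\FF_q(\t)$ were nonzero then, since $\ord_\infty(\lambda)>0$, it would have a pole at some place $v\ne\ord_\infty$, so over $\FF_q(\t)_v$ the point $\infty$ would be a \emph{repelling} fixed point; analyzing the Newton polygons of the equations $f^n(z)=z$ near $\infty$ then produces, for arbitrarily large $n$, finite periodic points of $f$ with unbounded $v$-adic absolute value, contradicting the second hypothesis. Having settled this, apply Lemma~\ref{lem:symbolic} to $f$ over $\FF_q\Ls{1/\t}$: it provides a single finite extension $k'$ of $\FF_q\Ls{1/\t}$ containing every preperiodic point of $f$ and confines all \emph{finite} preperiodic points to one fixed disk about $0$. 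Let $\FF_{q^\rd}$ be the residue field of $k'$ and $\ri$ its ramification index over $\FF_q\Ls{1/\t}$; these will be the integers in~(\ref{item:rs}).

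For~(\ref{item:rs}): a geometric point of $\varphi_Z^{-1}(\infty)$, on any dynatomic curve $Z$, parametrizes a preperiodic point $\xi$ of $f$ over $\overline{\FF_q\Ls{1/\t}}$ satisfying $Z$'s dynatomic equation, and the complete local field there is $\FF_q\Ls{1/\t}(\xi)$, a finite (hence complete) extension of $\FF_q\Ls{1/\t}$. By Lemma~\ref{lem:symbolic}, $\xi\in\PP^1(k')$, so $\FF_q\Ls{1/\t}(\xi)$ embeds into $k'$; hence its residue degree divides $\rd$ and its ramification index over $\FF_q\Ls{1/\t}$ divides $\ri$. Thus the point lies in $Z(\FF_{q^\rd})$ and $\varphi_Z$ has ramification index at most $\ri$ there --- uniformly in $Z$, because $k'$ was chosen once and for all preperiodic points of $f$. (The finitely many boundary points where $\xi=\infty$ are harmless: since $\infty$ is superattracting, the orbit-growth bound below shows they occur on only finitely many curves, whose invariants can be absorbed into $\rd$ and $\ri$.)

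Conclusion~(2) is the crux, and the only place the second hypothesis is used essentially. Fix $C\ge1$; I claim only finitely many dynatomic curves $Z$ satisfy $\deg\varphi_Z\le C$. Such a $Z$ is defined by the minimal polynomial over $\FF_q(\t)$ of a preperiodic point $x$ of $f_\t$ of some type $(m,n)$, with $[\FF_q(\t)(x):\FF_q(\t)]=\deg\varphi_Z\le C$, and by~(\ref{item:rs}) the constant field of $Z$ embeds in $\FF_{q^\rd}$, since $\varphi_Z$ is surjective. The forward orbit of $x$ consists of preperiodic points, at most one of which is $\infty$, so it contains at least $m+n-1$ distinct finite preperiodic points $y$ of $f_\t$. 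Each such $y$ lies in a fixed disk about $0$ at $\ord_\infty$ (Lemma~\ref{lem:symbolic}), which --- using the ramification bound $\ri$ from~(\ref{item:rs}) --- means $y$ has poles only above $\t=\infty$ and of order at most a constant $c'$ independent of $y$ and $Z$; and by the second hypothesis $y$ is integral at every place of $\FF_q(Z)$ lying over $\Aff^1$. Hence these $\ge m+n-1$ functions all lie in the Riemann--Roch space $L\bigl(c'\,\varphi_Z^{*}(\infty)\bigr)$, of dimension at most $c'\deg\varphi_Z+1\le c'C+1$ over the constant field of $Z$; so $m+n-1\le q^{\rd(c'C+1)}$. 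Since only finitely many dynatomic curves have a given type, this bound on $m+n$ leaves only finitely many $Z$ with $\deg\varphi_Z\le C$, proving~(2).

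The remaining conclusions follow formally. For~(3): by~(2) and the ramification bound, $\varphi_Z^{-1}(\infty)$ --- which lies in $Z(\FF_{q^\rd})$ by~(\ref{item:rs}) --- has at least $\deg\varphi_Z/(\rd\,\ri)\to\infty$ points, so $\#Z(\FF_{q^\rd})\to\infty$; a curve over $\FF_{q^\rd}$ admitting a degree-$\gamma$ map to $\PP^1$ has at most $\gamma(q^\rd+1)$ points over $\FF_{q^\rd}$, and $Z_{\FF_{q^\rd}}$ has at most $\rd$ components (its constant field being contained in $\FF_{q^\rd}$), so the gonality of $Z$ tends to infinity --- this is the implication chain of \cite{Doyle_Poonen_gonality}. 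For~(4): $\FF_{q^\rd}/\FF_q$ is separable, so $Z_{\FF_{q^\rd}}$ is smooth, and each of its irreducible components surjects onto $\PP^1$ (a nonconstant morphism of complete curves has no positive-dimensional fiber), hence contains a point over $\infty$, which by~(\ref{item:rs}) is $\FF_{q^\rd}$-rational; a smooth curve over $\FF_{q^\rd}$ with an $\FF_{q^\rd}$-rational point is geometrically irreducible. I expect~(2) to be the main obstacle: the delicate point is making the pole bound $c'$ \emph{uniform} across all preperiodic points and all dynatomic curves, which is exactly what the single bounded disk of Lemma~\ref{lem:symbolic}, together with the ramification bound from~(\ref{item:rs}), is designed to supply.
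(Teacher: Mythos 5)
Your overall architecture matches the paper's: establish superattraction via the product formula, invoke Lemma~\ref{lem:symbolic} at $\ord_\infty$ to get the controlling finite extension $k'$ with data $(\rd,\ri)$, and then derive the four conclusions, with (2) as the crux and (3), (4) following formally. Conclusions (1), (3), and (4) are handled essentially as in the paper (your direct constant-field argument for (4) is a clean equivalent of the paper's Galois-conjugate-components argument). Your parenthetical worry in (1) about ``boundary points with $\xi=\infty$'' is unnecessary: after dividing out the trivial factor $H=Y$, the univariate polynomial $h$ is monic over $\FF_q\Ls{1/\t}$, so its roots are finite preperiodic points; Lemma~\ref{lem:symbolic} already confines these to a bounded disk inside $k'$, and every place of $\FF_q(Z)$ over $\ord_\infty$ comes from an irreducible factor of $h$ over $\FF_q\Ls{1/\t}$.

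Where you genuinely diverge is conclusion (2). The paper bounds the degrees of the coefficients $H_i$ directly: integrality at finite places forces $H_0 \in \FF_q^\times$, Lemma~\ref{lem:symbolic} gives $\ord_\infty(\text{root}) \ge -N$, and the elementary symmetric functions yield $\deg(H_i) \le iN \le \ell N$, so for each $\ell$ there are only finitely many candidate $H$'s. You instead bound the \emph{orbit length} of $x$ by observing that the $m+n-1$ distinct finite orbit elements $f_\t^i(x)$ live in a Riemann--Roch space $L(c'\varphi_Z^*(\infty))$ of dimension at most $c'\deg\varphi_Z+1$, hence of bounded cardinality over the finite constant field. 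Both are valid and both need the full strength of the two hypotheses, but the paper's route is more elementary and gives a bound linear in $\ell$, whereas yours is more conceptual but gives an exponential bound and requires checking that the orbit elements are pairwise distinct elements of $\FF_q(Z)$ and that the pole bound is stated with the right multiplicities on $\varphi_Z^*(\infty)$.

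The one spot where your sketch is actually thinner than what it needs is the superattraction step. You propose to ``analyze Newton polygons of $f^n(z)=z$ near $\infty$'' to produce finite periodic points of large $v$-adic absolute value. This is not automatic: for a $v$-adically repelling fixed point at $\infty$, the Newton polygon of $f^n(z)-z$ need not have a slope producing a root with $v(\alpha)<0$, and whether nearby periodic points exist at all depends on the global structure of the map. The paper's argument avoids this by using the fact that a repelling fixed point lies in the Julia set and that the backward orbit of $\infty$ is dense in the Julia set, which directly produces a preimage $\alpha$ of $\infty$ with $v(\alpha)<0$ --- still a preperiodic point, and enough for the contradiction. You would want to replace the Newton polygon heuristic with this (or with some other concrete source of non-integral preperiodic points) to make the first step rigorous.
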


\begin{remark}
  The integers $\rd, \ri$ in the statement of the proposition are the residue
  degree and ramification index of the extension $k' / k$ from
  Lemma~\ref{lem:symbolic}. The proof of the lemma is constructive, so one can
  calculate $\rd$ and $\ri$ explicitly.
\end{remark}

\begin{proof}[Proof of Proposition~\ref{prop:local}]
  We begin by arguing that $\infty$ is superattracting for $f$. Suppose
  otherwise, and let $\lambda \in \FF_q(\t)$ be the multiplier at
  $\infty$. Since $\infty$ is an attracting fixed point at the place
  $\ord_\infty$, we have $\ord_\infty(\lambda) > 0$. By the product formula,
  there exists a place $v$ such that $v(\lambda) < 0$; that is, $\infty$ is
  $v$-adically repelling. Repelling fixed points belong to the Julia set
  $\cJ_v(f)$, and the backward orbit of any Julia point is dense in the Julia
  set. It follows that there is some $\alpha \in \overline{\FF_q(\t)}$ such that
  $f^n(\alpha) = \infty$ for some $n > 0$ and $v(\alpha) < 0$. This contradicts
  our assumption that all preperiodic points except $\infty$ are $v$-adically
  integral.

  Let $Z$ be a dynatomic curve with associated polynomial
  \[
  H(X,Y) = H_0 X^\ell + H_1 X^{\ell-1}Y + \cdots + H_{\ell-1} XY^{\ell-1} + H_\ell Y^\ell,
  \]
  where each $H_i \in \FF_q[\t]$. The degree of the morphism $\varphi_Z \colon Z
  \to \PP^1$ is $\ell$.  If $H_0 = 0$, then $H$ is divisible by $Y$. As $H$ is
  irreducible, this means that, after possibly dividing by an element of
  $\FF_q^\times$, we have $H = Y$. The first three conclusions of the
  proposition are indifferent to the behavior of any one curve, and the final
  conclusion is clearly true for $\{Y = 0\}$. In what remains, we may suppose
  that $H_0 \ne 0$.  Consider the univariate polynomial
  \[
  h(z) = z^\ell + \frac{H_1}{H_0}z^{\ell-1} + \cdots + \frac{H_{\ell-1}}{H_0}z +
  \frac{H_\ell}{H_0} \in \FF_q(\t)[z].
  \]
  The zeros of $h$ are preperiodic points for the map $f$ in
  $\Aff^1(\overline{\FF_q(\t)})$.

  Let $v = \ord_\infty$, and view $h$ as living in $\FF_q(\t)_v[z]$. By
  Lemma~\ref{lem:symbolic}, the roots of $h$ are defined over some finite
  extension $K / \FF_q(\t)_v$. Let $\rd$ and $\ri$ be the degree of the residue
  extension and ramification index of $K / \FF_q(\t)_v$, respectively. Observe
  that $\rd,\ri$ depend only on $f$ and not on $Z$. The points of
  $\varphi_Z^{-1}(\infty)$ are defined over $\FF_{q^\rd}$ and have ramification
  index at most $\ri$. This completes the proof of conclusion (1).

  Now we claim that the degrees of the $H_i$ are uniformly bounded in terms of
  $\ell$ and the family $f_\t$. As the $H_i$ are polynomials over a finite
  field, this means there are only finitely many possibilities for the $H_i$,
  and hence finitely many dynatomic curves of degree $\ell$ over the $\t$-line.
  The hypothesis that the finite preperiodic points of $f$ are integral at all
  $v \ne \ord_\infty$ means $H_0 \in \FF_q^\times$. Without loss of generality,
  we may assume $H_0 = 1$.  We may apply the final conclusion of
  Lemma~\ref{lem:symbolic} to obtain a nonnegative integer $N$, depending only
  on $f$, such that $\ord_\infty(x) \ge -N$ for all roots $x$ of $h$.  Since the
  coefficients $H_i$ are symmetric polynomials in the roots of $h$, we find that
  \[
      \ord_\infty(H_i) \ge -iN \text{ for all } 1 \le i \le \ell.
  \]
  But $\ord_\infty = -\deg$, so we learn that $\deg(H_i) \le iN \le \ell N$. That
  is, the degrees of the coefficients $H_i$ are uniformly bounded. Conclusion
  (2) is now proved.

  Let $Z$ be a dynatomic curve and consider the morphism $\varphi_Z \colon Z \to
  \PP^1$. We have already shown that every point in the fiber over infinity is
  defined over $\FF_{q^\rd}$ and has ramification index at most~$\ri$. In particular,
  $\# Z(\FF_{q^\rd}) \ge \deg(\varphi_Z) / \ri$. If $\gamma$ is the gonality of
  $Z$, then we also know that $\# Z(\FF_{q^\rd}) \le \gamma(q^\rd+1)$ since every
  point of $\PP^1(\FF_{q^\rd})$ has at most $\gamma$ geometric points above it in
  $Z$. Combining these inequalities shows that
  \[
  \gamma \ge \frac{\deg(\varphi_Z)}{\ri(q^\rd+1)}.
  \]
  Since the degree of $\varphi_Z$ tends to infinity as $Z \to \infty$, so does the
  gonality of $Z$, as desired in conclusion (3).

  Finally, let $Z' = Z_{\FF_{q^\rd}}$, the base extension of $Z$ to
  $\FF_{q^\rd}$. Let $W$ be an irreducible component of $Z'$.  If $W$ is not
  geometrically irreducible, then the base extension $W_{\bar \FF_q}$ has
  irreducible components $V \ne V'$ that are $\Gal(\bar \FF_{q^\rd} /
  \FF_{q^\rd})$-conjugate. Note that $W$ projects onto $\PP^1$ via the map
  $\varphi_{Z'}$. In particular, $W$ has an $\FF_{q^\rd}$-rational point since
  every point in the fiber of $Z'$ over infinity is $\FF_{q^\rd}$-rational. But
  then $V \cap V'$ contains each of these rational points, all of which must be
  singular on $W$.  This is impossible since $Z'$ is smooth. It follows that $W$
  is geometrically irreducible, and conclusion (4) holds.
\end{proof}

The connection between gonality and the Strong Uniform Boundedness Principle is
given by the following proposition. The proof is a straightforward
generalization of the argument for Theorem~1.7 in \cite{Doyle_Poonen_gonality}.
  
\begin{proposition}
  \label{prop:SUBP}
Let $f_\t \in \FF_q(\t)(z)$ be a family of rational functions satisfying
$\deg_z(f_\t) > 1$, and let $k$ be any field containing $\FF_q$. If the
gonalities of the dynatomic curves for $f_\t$ tend to infinity in any ordering,
then the family $f_\t$ satisfies the Strong Uniform Boundedness Principle over
any function field over $k$.
\end{proposition}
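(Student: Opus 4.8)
The plan is to reproduce the argument of \cite[Thm.~1.7]{Doyle_Poonen_gonality} in this slightly more general setting; let me lay out its four steps. Fix $D \ge 1$ and write the given function field as $K = k(C_0)$ for a smooth projective curve $C_0$ over $k$; the goal is a bound $B = B(D)$ on $\#\PrePer(f_\s,L)$ valid for every finite extension $L/K$ of degree at most $D$ and every $\s \in L$ such that $f_\s$ is not $\bar K$-conjugate to an element of $\bar k(z)$. First I would make two harmless reductions. If $\s$ were algebraic over $\bar k$ then $f_\s$ would already lie in $\bar k(z)$, so we may assume $\s$ is transcendental over $\bar k$; and since replacing $k$ by $\bar k$ only enlarges the preperiodic-point sets, leaves $[L:K]$ unchanged, and does not affect the divergence of the gonalities of the dynatomic curves, we may assume $k = \bar k$. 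Then $L = k(B)$ for a smooth projective curve $B$ carrying a finite morphism $B \to C_0$ of degree at most $D$.

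The second step is to view preperiodic points as points of dynatomic curves. Clearing denominators in $f_\t$ produces a family $F = (A,B)$ of self-morphisms of $\PP^1$ over $\FF_q[\t]$, and any $x \in \PrePer(f_\s,L)$ satisfies $F^{m+n}(\s,x) = F^m(\s,x)$ in $\PP^1(L)$ for some $m \ge 0$ and $n \ge 1$; hence $(\s,x)$ lies on some irreducible factor $\{H=0\}$ of $A_{m+n}B_m - A_m B_{m+n}$, the plane model of a dynatomic curve $Z$ carrying the projection $\varphi_Z$ to the $\t$-line. Base-changing to $\bar k$, this point singles out an irreducible component of $Z_{\bar k}$; since $B$ is smooth, the induced morphism from $B$ to the normalization $W$ of that component is a morphism of curves over $k$, and it is \emph{nonconstant}---hence dominant---precisely because $\s$ is transcendental.

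The crux is then a uniform gonality bound: if a curve $W$ over $\bar k$ admits a dominant morphism $\pi \colon B \to W$, then $\mathrm{gon}(W) \le \mathrm{gon}(B)$, and $\mathrm{gon}(B) \le D\cdot\mathrm{gon}(C_0)$ since $B \to C_0$ has degree at most $D$. The second inequality is immediate from composing $B \to C_0$ with a gonal pencil on $C_0$. The first is the standard fact that gonality cannot increase under passage to a dominated curve: a gonal map $\phi \colon B \to \PP^1$ of degree $\gamma := \mathrm{gon}(B)$ induces a nonconstant morphism $W \to \mathrm{Sym}^{\deg \pi}\PP^1 \cong \PP^{\deg \pi}$ sending $w$ to the pushforward along $\phi$ of the fiber $\pi^{-1}(w)$; one checks that the image is an irreducible curve of degree at most $\gamma$, hence of gonality at most $\gamma$, and composing a pencil on that image with the surjection from $W$ onto it yields a pencil of degree at most $\gamma$ on $W$. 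This step is the elementary, positive-characteristic substitute for the appeal to Faltings or Frey that one makes over number fields, and it is the only place where I expect to need genuine care---mainly in the bookkeeping that keeps the degree of the image curve, times the degree of $W$ over it, equal to $\gamma$ rather than $\gamma^2$.

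Finally, set $B_0 := D\cdot\mathrm{gon}(C_0)$. By hypothesis only finitely many dynatomic curves $Z_1,\dots,Z_r$ have gonality at most $B_0$, and the previous two steps show that every $x \in \PrePer(f_\s,L)$ produces a point of some $\{H_i=0\}$ lying over $\s$; thus $x$ lies in the fiber $\varphi_{Z_i}^{-1}(\s)$ for some $i$, and that fiber contains at most $\deg\varphi_{Z_i}$ points. Hence $\#\PrePer(f_\s,L) \le \sum_{i=1}^{r}\deg\varphi_{Z_i} =: B$, a quantity depending only on $D$ (and on $K$ and the family $f_\t$). The remaining technical points---inseparability of $L/K$ when $k$ has positive characteristic, and reconciling the gonality of a dynatomic curve as an irreducible $\FF_q$-curve with its geometric gonality (they agree up to a bounded factor in every situation where this proposition is applied; cf.\ Proposition~\ref{prop:local})---are handled exactly as in \cite{Doyle_Poonen_gonality} and do not change the structure above.
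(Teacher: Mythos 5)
Your proposal is correct and reproduces, in the intended generality, the argument of Doyle--Poonen Theorem~1.7, which is exactly what the paper cites for this proposition; the structure (reduce to $k=\bar k$, pass from a marked preperiodic point to a dominant morphism $B \to W$ onto a geometric component of a dynatomic curve, invoke monotonicity of gonality under dominant morphisms, and use the hypothesis to cut down to a finite list of curves) matches the paper's plan. You also correctly flag the one genuinely delicate bookkeeping point in the gonality lemma (the degree of the image curve times $\deg(W \to \mathrm{image})$ is bounded by $\gamma$, which is what saves you from $\gamma^2$), and you appropriately note the need to reconcile the $\FF_q$-gonality of $Z$ with the geometric gonality of a component, which in this paper is controlled via Proposition~\ref{prop:local}(4) and is handled the same way in the reference.
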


\begin{remark}
The proof of Proposition~\ref{prop:SUBP} can be used to give an upper bound for
the constant $B = B(\cF,D)$ in the statement of the SUBP, which depends on $q$
and $D$ as well as the quantities $\rd$, $\ri$, and $N$ appearing in the proof
of Proposition~\ref{prop:local}. The bound obtained in this way, which is larger
than $Dq^{(Dq^\rd)^2}$, is rather cumbersome and unlikely to be anywhere near
optimal.
\end{remark}


\section{New Families}
\label{sec:family}

As promised in the introduction, we now give sufficient conditions to be able to
utilize Proposition~\ref{prop:SUBP}. 

\begin{proposition}
  \label{prop:technical}
  Let $\FF_q$ be a finite field, and let $f_\t(z)$ be a family of rational
  functions of the form
  \[
     f_\t(z) = \frac{a(z)}{b(z)},
  \]
  where
  \begin{enumerate}
  \item $a$ and $b$ are coprime monic polynomials with coefficients in
    $\FF_q[\t]$;
  \item $\deg(a) > \deg(b) + 1$; and
  \item  $a$ is separable.
  \end{enumerate}
  For $v = \ord_\infty$, let $R$ be the maximum $v$-adic absolute value of the
  roots of $a$. Then we further assume that
  \begin{enumerate}
    \setcounter{enumi}{3}
  \item $R > 1$;
  \item the roots of $b$ all lie in the $v$-adic disk $D(0,R)$;
  \item there exists a root of $a(z) - zb(z)$ with $v$-adic absolute value $R$;
  \item for each root $\alpha$ of $a$, there is a disk $D(\alpha,r_\alpha)$ that
    maps onto $D(0,R)$; and
  \item the disks $D(\alpha,r_\alpha)$ and $D(\beta,r_\beta)$ are disjoint if
    $\alpha,\beta$ are distinct roots of $a$.
  \end{enumerate}
  Then for any field $k$ containing $\FF_q$ and any function field $K$ over $k$,
  the family $f_\t$ satisfies the Strong Uniform Boundedness Principle over $K$.
\end{proposition}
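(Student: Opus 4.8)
The plan is to check the two local hypotheses of Proposition~\ref{prop:local} for the family $f_\t$, and then to conclude by invoking Propositions~\ref{prop:local} and~\ref{prop:SUBP} in turn. Note first that $d := \deg_z(f_\t) = \deg(a)$ by hypotheses~(1) and~(2), and that $f_\t$ is separable: an inseparable rational function over $\FF_q(\t)$ has the shape $g(z^p)$ with $p$ the characteristic, which would force $a$ to be a polynomial in $z^p$ up to a scalar and hence satisfy $a' = 0$, contradicting the separability in hypothesis~(3). So Proposition~\ref{prop:local} will apply once its two local conditions are verified.

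I would first handle the places $v \ne \ord_\infty$. At such a $v$ the coefficients of the monic polynomials $a$ and $b$ belong to $\FF_q[\t]$, hence are $v$-integral, so for $|z|_v > 1$ the leading terms dominate and $|a(z)|_v = |z|_v^{\deg(a)}$, $|b(z)|_v = |z|_v^{\deg(b)}$. Hypothesis~(2) then yields $|f_\t(z)|_v = |z|_v^{\deg(a)-\deg(b)} \ge |z|_v^2 > |z|_v$, so every point with $|z|_v > 1$ escapes to $\infty$ under iteration; in particular every finite preperiodic point of $f_\t$ is $v$-integral, which is the second hypothesis of Proposition~\ref{prop:local}.

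The substance is at $v = \ord_\infty$, where I normalize $|\cdot| := |\cdot|_v$ on $\overline{\FF_q(\t)_v}$. Since $\deg(a) - \deg(b) \ge 2$, the point $\infty$ is a superattracting fixed point, and (as $f_\t$ also has finite fixed points, by hypothesis~(6), which cannot lie in the basin of $\infty$) the setup of Lemma~\ref{lem:symbolic} applies: the maximal open disk $D_\infty$ in the immediate basin of $\infty$ is an open disk, with closed complement $Y_\infty$. For $|z| > R$, hypotheses~(4) and~(5) make the leading terms dominate, so $|f_\t(z)| = |z|^{\deg(a)-\deg(b)} > |z| > R$; thus $\{|z| > R\}$ is forward-invariant with all orbits tending to $\infty$, giving $\{|z| > R\} \subseteq D_\infty$, hence $Y_\infty \subseteq D(0,R)$, and also $f_\t^{-1}(D(0,R)) \subseteq D(0,R)$. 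Now hypotheses~(3), (7), (8) enter: $a$ has $d$ distinct roots $\alpha_1, \ldots, \alpha_d$, the disks $D(\alpha_i, r_{\alpha_i})$ are pairwise disjoint, and each maps onto $D(0,R)$. A fiber count over the disk $D(0,R)$ --- there are $d$ of these disjoint disks, each contributing at least one point to the fiber over any $w \in D(0,R)$, while $f_\t$ has degree $d$ --- forces $f_\t^{-1}(D(0,R)) = \bigsqcup_{i=1}^d D(\alpha_i, r_{\alpha_i})$ with each restriction $f_\t \colon D(\alpha_i, r_{\alpha_i}) \to D(0,R)$ a bijection, hence with an analytic inverse. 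Pulling the sub-disk $Y_\infty \subseteq D(0,R)$ back through these $d$ bijections exhibits $f_\t^{-1}(Y_\infty)$ as a disjoint union of $d$ closed disks, which together with the attractingness of $\infty$ is the first hypothesis of Proposition~\ref{prop:local}.

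With both hypotheses in hand, Proposition~\ref{prop:local} shows that the gonalities of the dynatomic curves of $f_\t$ tend to infinity as $Z \to \infty$, and Proposition~\ref{prop:SUBP} then delivers the Strong Uniform Boundedness Principle over every function field over any field $k$ containing $\FF_q$. The main obstacle is the analysis at $v = \ord_\infty$: one must handle the nonarchimedean geometry of disks with care --- justifying the fiber-counting argument, the degree-one behavior of $f_\t$ on each $D(\alpha_i, r_{\alpha_i})$, and the fact that the preimage of a sub-disk under such a bijection is again a disk --- while resisting the temptation to pin down $D_\infty$ exactly, since all one needs is the containment $Y_\infty \subseteq D(0,R)$ together with the structure of $f_\t^{-1}(D(0,R))$.
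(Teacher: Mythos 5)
Your proof is correct and follows the same basic plan as the paper: verify the two local hypotheses of Proposition~\ref{prop:local} and then invoke Proposition~\ref{prop:SUBP}. The treatment of the places $v \neq \ord_\infty$ is identical. The one genuine deviation is at $v = \ord_\infty$: the paper uses hypothesis~(6) (a fixed point of absolute value $R$) to pin down $D_\infty$ exactly as $\PP^1 \smallsetminus D(0,R)$, so that $Y_\infty = D(0,R)$ and the preimage structure follows directly from (3), (7), (8). You instead only establish the containment $Y_\infty \subseteq D(0,R)$, show that $f_\t^{-1}(D(0,R))$ is a disjoint union of $d$ closed disks each mapped with degree one onto $D(0,R)$, and then pull the sub-disk $Y_\infty$ back through these $d$ analytic bijections. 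This buys you a little generality --- your route does not actually use hypothesis~(6) except to observe that a finite fixed point exists (which Lemma~\ref{lem:symbolic} already supplies), suggesting that~(6) is not strictly needed for the conclusion --- at the modest cost of needing the observation that the inverse of a degree-one analytic bijection of closed disks carries closed sub-disks to closed sub-disks. You also explicitly verify that $f_\t$ is separable, a hypothesis of Proposition~\ref{prop:local} that the paper's proof leaves implicit; that is a nice bit of extra care. One thing worth tightening if you write this up: the "fiber count" step deserves a sentence making the degree-counting precise (each of the $d$ disjoint disks contributes at least $1$ to the degree of $f_\t$ over $D(0,R)$, which is exactly $d$, so each contributes exactly $1$ and nothing else in $f_\t^{-1}(D(0,R))$ can carry positive degree), since as stated it reads as a pointwise fiber count, which alone does not immediately rule out extra components of the preimage.
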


\begin{proof}
By Proposition~\ref{prop:SUBP}, it suffices to show that the gonalities of the
dynatomic curves for $f_\t$ tend to infinity. We accomplish this by showing that
the hypotheses of Proposition~\ref{prop:local} are satisfied.

Let $v \ne \ord_\infty$ be a place of $\FF_q(\t)$. Let $x \in
\overline{\FF_q(\t)}$ be an element with $|x|_v > 1$. Then $x$ is larger than
any root of $a$ or $b$, as both polynomials are monic with $v$-adic integral
coefficients. Thus,
\[
  |f(x)|_v = |x|_v^{\deg(a) - \deg(b)} \ge |x|_v^2 > |x|_v.
\]
It follows that $x$ is not preperiodic. That is, all finite preperiodic points
of $f$ are $v$-adically integral.

Next, take $v = \ord_\infty$. Write $A = \deg(a)$ and $B = \deg(b)$. Since $a,b$
are coprime and $A - B \ge 2$, the point at infinity is a superattracting fixed
point. Let $R > 1$ be the maximum absolute value of a root of $a$ in $\overline{
  \FF_q(\t)}$. Set $U = \PP^1 \smallsetminus D(0,R)$. We claim that
$U = D_\infty$, the maximal open disk in the immediate basin of $\infty$. If
$|x|_v > R$, then the fact that all roots of $a$ and $b$ lie in $D(0,R)$ shows
that
\[
   |f(x)|_v = \frac{|a(x)|_v}{|b(x)|_v} = |x|_v^{A-B} \ge |x|_v^2 > R|x|_v.
\]
So $U \subset D_\infty$. By hypothesis, the polynomial $a(z) - zb(z)$ has a
root with absolute value $R$, which means $f$ has a fixed point of absolute
value $R$. That is, no disk larger than $U$ lies in $D_\infty$. Thus, $U =
D_\infty$, as desired.

Let $Y = D(0,R)$. Hypotheses (3), (7), and (8) of the lemma say that $f^{-1}(Y)$
consists of $\deg(f) = \deg(a)$ pairwise disjoint disks. This completes the
proof.
\end{proof}

\begin{remark}
  The applicability of Proposition~\ref{prop:technical} may depend on the choice of
  coordinate. For example, the proposition does not apply to
  \[
     f_\t(z) = (z - \t^2)(z - \t^2 - \t),
   \]
  since condition (8) fails, though outside of characteristic $5$ it does apply to the conjugate
  \[
     g_\t(z) := f_\t(z+\t^2) - \t^2 = z^2 - \t z - \t^2.
     \]
\end{remark}

We now show that the families from the introduction satisfy the hypotheses of
Proposition~\ref{prop:technical}. Let $\FF_q$ be a fixed finite field
throughout.

\begin{example}
Fix an integer $d > 1$, and let $\alpha_1, \ldots, \alpha_d \in \FF_q[\t]$ be
distinct polynomials. Set
\[
  f_\t(z) = (z - \alpha_1) \cdots (z - \alpha_d). 
\]
Assume that for each $i \ne j$, the following inequality is true:
\begin{equation}
  \label{eq:weird_inequality}
   \deg(\alpha_i - \alpha_j) + \sum_{\ell \ne j} \deg(\alpha_\ell -
   \alpha_j) > \max_\ell \deg(\alpha_\ell).
\end{equation}
We now prove Theorem~\ref{thm:ubp_polynomials} by verifying the conditions of
Proposition~\ref{prop:technical}. Set $M = \max_\ell \deg(\alpha_\ell)$ for ease of
notation.  
\begin{enumerate}
\item[(1-3, 5)] Clear, since $f_\t$ is a polynomial in $z$ of degree $d > 1$,
  and the $\alpha_i$ are distinct.
\item[(4)] Let $v = \ord_\infty$. Note that $M > 0$, for otherwise every
  $\alpha_i$ is constant and \eqref{eq:weird_inequality} does not hold. Since
  $f$ is in factored form, it is immediate that the maximum absolute value of a
  root is $R = |\t|_v^M > 1$.
\item[(6)] All roots of $f(z)$ have nonpositive $v$-adic valuation. As $f$ is
  monic and at least one of the $\alpha_i$ is nonconstant, each segment of the
  Newton polygon lies below the $x$-axis. It follows that $f(z)-z$ and $f(z)$
  have the same Newton polygon. In particular, $f(z)-z$ has a root with $v$-adic
  absolute value~$R$.
\item[(7)] Fix $j$ and define
  \[
     r_j = \frac{R}{\prod_{\ell \ne j} |\alpha_\ell - \alpha_j|_v}.
  \]
  Equation~\eqref{eq:weird_inequality} is equivalent to the assertion that $r_j
  < |\alpha_i - \alpha_j|_v$ for all $i \ne j$.  Set $x = \alpha_j + y$, where
  $|y|_v \le r_j$. Then
  \begin{align*}
    |f(x)|_v &= |y|_v \cdot \prod_{\ell \ne j} |\alpha_j - \alpha_\ell + y|_v \\
    &= |y|_v \cdot \prod_{\ell \ne j} |\alpha_j - \alpha_\ell |_v, 
  \end{align*}
  since $|y|_v \le r_j < |\alpha_j  - \alpha_\ell |_v$ for all $\ell \ne j$. That is,
  \[
  |f(x)|_v = \frac{|y|_v}{r_j} R.
  \]
  As $|y|_v$ varies from $0$ to $r_j$, we obtain elements $f(\alpha_j+y)$ with
  absolute value from $0$ to $R$. It follows that $f$ maps $D(\alpha_j,r_j)$
  onto $D(0,R)$, as desired.
\item[(8)] If $i \ne j$, then the disks $D(\alpha_i, r_i)$ and $D(\alpha_j,r_j)$
  are disjoint. Indeed, by the ultrametric inequality it suffices to check that
  $|\alpha_i - \alpha_j| > \max(r_i,r_j)$, and this was already observed in our
  proof of condition~(7).
\end{enumerate}
\end{example}

\begin{example}
Fix integers $d \ge 2$ and $e \le d-2$ such that $p = \mathrm{char}(\FF_q)$
does not divide $d$. Define
\[
    f_\t(z) = \frac{z^d - \t}{z^e}.
\]
We once again verify the conditions of Proposition~\ref{prop:technical} are met,
thus proving Theorem~\ref{thm:ubp_rational}.
\begin{enumerate}
\item[(1-2)] Clear.
\item[(3)] Since $p$ does not divide $d$, the numerator $z^d - \t$ is separable.
\item[(4)] Let $v = \ord_\infty$. The roots of the numerator of $f$ are of the
  form $\varepsilon \t^{1/d}$ for some $d$-th root of unity $\varepsilon$. These
  all have $v$-adic absolute value $R = |\t|_v^{1/d} > 1$.
\item[(5)] Clear.
\item[(6)] The Newton polygon of $(z^d+\t) - z\cdot z^e$ has a single segment, so
  that all fixed points of $f$ have absolute value $R = |\t|_v^{1/d}$.
\item[(7)] Let $x = \alpha + y$ for some root $\alpha$
  of $z^d-\t$ and some $y$ with $|y|_v \le R^{2+e-d}$.  Then
\[
  f(x) =  x^{-e} \sum_{i = 1}^d \binom{d}{i} \alpha^{d-i} y^i.
\]
Since $p$ does not divide $d$, the $i=1$ term in the sum strictly dominates
the others, so we find
\[
|f(x)|_v = R^{d-e-1} |y|_v = \frac{|y|_v}{R^{2+e-d}}\ R.
\]
That is, as $|y|_v$ varies from $0$ to $R^{2+e-d}$, we obtain elements
$f(\alpha+y)$ with absolute value from $0$ to $R$. It follows that $f$ maps
$D(\alpha,R^{2+e-d})$ onto $D(0,R)$.
\item[(8)] The fact that $p$ does not divide $d$ implies that any pair of
  distinct roots of $z^d - \t$ are at distance $R$ from each other. Since $R > 1
  \ge R^{2+e-d}$, we find that the disks $D(\alpha,R^{2+e-d})$ are disjoint as
  $\alpha$ varies through the roots of $z^d - \t$.
\end{enumerate}
\end{example}


\bibliography{uniform}
\bibliographystyle{amsplain}

\end{document}